\author[L.~Boulton]{Lyonell Boulton$^\beta$} 
\address{$^\beta$Department of Mathematics and Maxwell Institute for Mathematical Sciences, Heriot-Watt University, Edinburgh, EH14 4AS.}
\author[M.~Marletta]{Marco Marletta$^\mu$}
\address{$^\mu$School of Mathematics, Cardiff University, Senghennydd Road, Cathays, Cardiff, CF24 4AG.}
\date{June 2024}
\title{The pseudospectrum of an operator with Bessel-type singularities}
\newcommand{\sgn}{\operatorname{sgn}}
\newcommand{\quasi}{\mathrm{u}}
   \newtheorem{theorem}{Theorem}
     \newtheorem{lemma}{Lemma}
     \newtheorem{proposition}{Proposition}
     \newtheorem{remark}{Remark}
\begin{document}

\subjclass{47E05, 47A10}

\keywords{Spectrum and pseudospectrum, ordinary differential operators, pseudo-modes}

\begin{abstract}
In this paper we examine the asymptotic structure of the pseudospectrum of the singular Sturm-Liouville operator $L=\partial_x(f\partial_x)+\partial_x$ subject to periodic boundary conditions on a symmetric interval, where the coefficient $f$ is a regular odd function that has only a simple zero at the origin. The operator $L$ is closely related to a remarkable model examined by Davies in 2007, which exhibits surprising spectral properties balancing symmetries and strong non-self-adjoitness. In our main result, we derive a concrete construction of classical pseudo-modes for $L$ and give explicit exponential bounds of growth for the resolvent norm in rays away from the spectrum. 

This paper is dedicated to Professor E. Brian Davies FRS on the occasion of his 80th birthday and the final version appears in the 
Journal of Spectral Theory, {\tt https://ems.press/journals/jst/articles/14297856}.
\end{abstract}

\maketitle


\section{Introduction}

Let $f\in \mathrm{C}^2([-1,1];\mathbb{R})$ 
be an odd function such that $f'(0)\not=0$ and $f(x)>0$ for $x>0$.
We associate with $f$ a second order differential expression $l$ defined on suitable functions $u$ by
\[
    (l u)(x)=(f(x)u'(x)+u(x))'.
\]
Note that $l$ has an interior singularity at $x=0$.

In this paper we shall study the pseudospectra of a closed operator realisation $L$ of $l$, specified by
periodic boundary conditions and regularity at $x=0$. We focus particularly on the asymptotic behaviour of the resolvent norm far from the spectrum. The fact that the numerical range of $L$ is easily
shown to be the whole complex plane means that there is no `direction of escape' to infinity in which the
behaviour of the pseudospectra can easily be foreseen.

Interest in this type of operator seems to have started with an idealised model for a thin layer of viscous fluid inside a rotating cylinder,
examined by Benilov, O'Brien and Sazonov \cite{BOS2003}; their case corresponds to $f(x)=\frac{2\varepsilon}{\pi}\sin(\pi x)$ ($0<\varepsilon<1$),
resulting in additional singularities at the endpoints. Benilov \emph{et al.} presented various observations concerning the spectrum and the associated time-evolution linear problem - most prominently, they noted that the spectrum appeared to be purely imaginary. 
Their conjectures have been systematically addressed in recent years for various trigonometric $f$, resulting in several articles concerning both
the spectrum and the ill-posedness of the corresponding evolution equation, e.g. \cite{Davies2007, ChugPelin2009, ChugKarabas, ChugVolk, Weir2009, Weir2010, DaviesWeir2010}. Some of these results were generalised  in \cite{BLM2010} to a wider class of $f$ (odd, positive on $(0,\pi)$ and $2\pi$-periodic, with various regularity assumptions), where it is shown that the eigenvalues are always purely imaginary; and also in \cite{BMR2012}, where it is shown that the spectrum is discrete and infinite, the resolvent is of Schatten class ${\mathcal C}_p$ for $p>\frac{2}{3}$, but the eigenfunctions do not form an unconditional basis.

Beyond these results, for $f(x)=\frac{2\varepsilon}{\pi} \sin(\pi x)$, Figure~6 of \cite{BOS2003} shows a numerically computed graph of the pseudospectrum, rotated by $\frac{\pi}{2}$, for one (unspecified) value of $\varepsilon$. The pseudospectral level sets appear qualitatively to be curves asymptotically close to parabolas fitting the symmetry of the spectrum, though to the best of our knowledge this has never been proved.
Similar pictures appear for a related operator on pages 124--125 and 406--408 of \cite{TE2005}, again obtained by numerical methods.

Most rigorous results that we know concerning pseudospectra of differential operators such as $L$, depend upon the construction of pseudo-modes. 
We shall give a review of some of these immediately after the statement of Theorem  \ref{th:thepseudospectrum} below, but broadly they are either semi-classical \cite{DSZ} or an evolved form of WKB-type \cite{KS2017}.
The pseudo-modes that they generate are localised functions, and no boundary 
conditions need be accommodated. By contrast, below we construct pseudo-modes for the operator $L$  by modifying the unique regular-at-the-origin solution of the differential equation $l \phi = E\phi$ using a `periodiser',
to satisfy periodic boundary conditions. We analyse the behaviour of these resulting pseudo-modes as the spectral parameter $E$ escapes to infinity on rays. The spectrum of $L$ would be quite different were the boundary conditions not periodic: \emph{a fortiori}, these boundary conditions
therefore play a crucial role in determining the pseudospectra, despite the fact that resolvents corresponding to different boundary conditions
only differ by finite rank terms.

For $f(x)=2\varepsilon x$, the regular-at-the-origin solution  is given explicitly in terms of Bessel functions. This yields a direct construction of our pseudo-modes. For general $f$ we use special transformators \cite{Kravchenko, Volk} to show that the underlying linearised
$f$ still determines the dominant behaviour of our pseudo-modes for large $E$. As always for transformators, it is the $E$-independence
of the underlying kernel which makes them so well adapted to asymptotic analysis.

\section{Summary of main results and scope of the work \label{section:2}}

Throughout this paper, the function $f:[-1,1]\longrightarrow \mathbb{R}$ is twice continuously differentiable with $f(-x)=-f(x)$ and $\sgn(f(x))=\sgn(x)$. Additionally, 
\[
      f(x)= \frac{2\varepsilon x}{1 + x r(x)} 
\] 
where $\varepsilon\in(0,1)$ is a fixed parameter and $r$ is a fixed, twice continuously differentiable function, which is odd and analytic at $x=0$. Note that \[ \frac{1}{f(x)}=\frac{1}{2\varepsilon x}+\frac{r(x)}{2\varepsilon}\] and $f(x)=2\varepsilon x+O(x^3)$ as $x\to 0$; also $f(\pm 1)\not=0$.

We define a differential operator $L$ on a domain in $\operatorname{L}^2(-1,1)$ by 
\[ \mbox{Dom}(L) = \left\{u\in \operatorname{C}([-1,1])\,\,:\,\,\begin{aligned} & fu'+u\in \operatorname{AC}(-1,1)\\ &u(-1)=u(1) \end{aligned}\right\},
\]
with 
\[
    (L u)(x) = (l u)(x) = (f(x)u'(x)+u(x))'\] 
for $u\in \mbox{Dom}(L)$. Note in particular that any $u\in\mbox{Dom}(L)$ must be continuous at the origin, and $2$-periodic
in the sense that $u(-1)=u(1)$.

The following lemma is proved at the end of Section~\ref{sec2}; although its proof involves classical arguments, the result is not an immediate
consequence of existing theory, hence we give full details of its validity.

\begin{lemma} \label{Lemma1}
The densely defined linear operator $L:\operatorname{Dom}(L)\longrightarrow \operatorname{L}^2(-1,1)$ is closed and has compact resolvent.
\end{lemma}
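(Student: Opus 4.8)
The plan is to construct the resolvent of $L$ explicitly as an integral operator, exploiting the interior singularity at $x=0$ to minimise the conditions that must be imposed there, and then to read off both closedness and compactness from the construction. The first step is a local Frobenius analysis of $lu=\lambda u$ at the origin. Since $f(x)=2\varepsilon x+O(x^3)$, the indicial exponents are $0$ and $-\tfrac{1}{2\varepsilon}$, so there is a solution $\phi_\lambda$ that is regular (continuous, in fact analytic) at $0$, together with a linearly independent solution behaving like $|x|^{-1/(2\varepsilon)}$ as $x\to0$. As $\varepsilon\in(0,1)$ gives $1/\varepsilon>1$, this singular solution is not in $\operatorname{L}^2$ near $0$; hence $x=0$ is of limit-point type and the requirement $u\in\operatorname{C}([-1,1])$ built into $\operatorname{Dom}(L)$ forces $u$ to be a multiple of $\phi_\lambda$ on each side of the origin. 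This is the structural fact that removes the need for a boundary condition at the singular point: the only interface condition is continuity of $u$ at $0$, the quasi-derivative $fu'+u$ matching automatically because $fu'\to0$ for the regular solution. Density of $\operatorname{Dom}(L)$ is immediate, since it contains the constants together with all smooth functions supported away from $\{-1,0,1\}$.

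Next, for a spectral parameter $\lambda$ I would solve $(L-\lambda)u=g$ by variation of parameters on $(-1,0)$ and $(0,1)$ separately, discarding the singular solution on each side, and then couple the pieces through continuity at $0$ and the periodicity $u(-1)=u(1)$. Normalising $\phi_\lambda^{\pm}(0)=1$, continuity forces equal coefficients on the two sides, so the homogeneous problem has a nontrivial solution precisely when the entire characteristic function $\Delta(\lambda):=\phi_\lambda^{+}(1)-\phi_\lambda^{-}(-1)$ vanishes; using the symmetry $lP=-Pl$ with $(Pu)(x)=u(-x)$, one has $\phi_\lambda^{-}(-1)=\phi_{-\lambda}^{+}(1)$, so everything is expressed through a single regular solution. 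For $\lambda$ with $\Delta(\lambda)\neq0$ the inhomogeneous system is uniquely solvable and yields $(L-\lambda)^{-1}$ as an integral operator with Green kernel $G_\lambda(x,y)$ assembled from the regular solutions. Because the kernel is built entirely from solutions that are \emph{regular} at the origin, while the Abel quantity $f\,W$ is a nonzero constant, $G_\lambda$ is bounded on $(-1,1)^2$; in particular $G_\lambda\in\operatorname{L}^2\bigl((-1,1)^2\bigr)$, so $(L-\lambda)^{-1}$ is Hilbert--Schmidt. Checking that this operator maps $\operatorname{L}^2(-1,1)$ into $\operatorname{Dom}(L)$ and inverts $L-\lambda$ on both sides shows that $L-\lambda$ is a bijection with bounded inverse, whence $L$ is closed; Hilbert--Schmidtness then gives compactness of the resolvent at one point of $\rho(L)$, and hence throughout $\rho(L)$.

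The main obstacle is to show that $\rho(L)$ is nonempty, i.e. that $\Delta\not\equiv0$. The usual route via coercivity is blocked: because $f$ changes sign at the origin, the bulk term $\int_{-1}^{1} f\,|u'|^{2}$ occurring in $\langle Lu,u\rangle$ is indefinite, in keeping with the fact that the numerical range of $L$ is all of $\mathbb{C}$. I would therefore establish $\Delta\not\equiv0$ from the large-$\lambda$ behaviour of the regular solution along the positive real axis: on $(0,1)$, where $f>0$ and $\lambda\to+\infty$, a matched Frobenius/WKB analysis shows that $\phi_\lambda^{+}$ grows like $\exp\bigl(\sqrt{\lambda}\int_0^{x}ds/\sqrt{f(s)}\bigr)$, so that $\phi_\lambda^{+}(1)\to\infty$, whereas on $(-1,0)$ the same parameter produces oscillatory solutions, keeping $\phi_{-\lambda}^{+}(1)$ bounded. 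Thus the first term dominates and $\Delta(\lambda)\neq0$ for all sufficiently large $\lambda>0$. This asymmetry between the two sides of the origin is exactly the phenomenon forced by the sign change of $f$, and the matched-asymptotics estimate behind it is the only non-routine ingredient; for $f(x)=2\varepsilon x$ it can be made fully explicit through the modified Bessel representation of $\phi_\lambda^{+}$, which is the prototype for the asymptotic machinery developed later in the paper. Everything else — the Frobenius expansion, the variation-of-parameters bookkeeping, and the Hilbert--Schmidt estimate — is classical.
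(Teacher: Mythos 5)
Your outline is workable, but it differs substantially from the paper's argument and contains one factual error together with one heavy deferred obligation.

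The paper never constructs the resolvent at a general spectral point. It works entirely at $E=0$, where the homogeneous solutions are explicit and elementary ($1$ and $g^{-1/\varepsilon}$, with $g(x)\sim|x|^{1/2}$): Lemma~\ref{lem:Green} characterises $\operatorname{Dom}(L)$ through the variation-of-parameters formula \eqref{eq:Green} and the orthogonality condition \eqref{eq:orthogonality}, and the proof of Lemma~\ref{Lemma1} then reads off $\operatorname{Ker}(L)=\operatorname{Span}\{1\}$, the closedness and codimension-one property of $\operatorname{Ran}(L)$, and a compact inverse $\tilde S$ of the reduced operator given by the bounded kernel \eqref{kernelH}. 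Closedness and compactness of the resolvent follow from this pseudo-inverse without any asymptotic analysis. Your route instead hinges on showing $\rho(L)\neq\emptyset$ via large-$\lambda$ asymptotics of the regular solution. For $f(x)=2\varepsilon x$ this is the Bessel computation, but for general $f$ it requires either a Langer-type turning-point analysis or the transformator representation \eqref{eq:transK} --- precisely the machinery the paper postpones to Sections~\ref{section:4} and~\ref{section:3}. What you call ``the only non-routine ingredient'' is therefore most of the work, and it is avoidable: the $E=0$ pseudo-inverse plus the Fredholm/analytic-Fredholm argument gets you closedness and compact resolvent much more cheaply.

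The factual error: the Abel quantity $fW$ is \emph{not} constant here. Because of the first-order term $u'$, two solutions $\phi,\psi$ of $(fu')'+u'=\lambda u$ satisfy $(fW)'=-W$, hence $fW=C\exp\bigl(-\int \mathrm{d}x/f\bigr)\propto g(x)^{-1/\varepsilon}$, which blows up at the origin (at $E=0$ one checks directly that $fW=-g^{-1/\varepsilon}$ for $\phi=1$, $\psi=g^{-1/\varepsilon}$). Taken literally, your reasoning would produce a Green kernel containing $\psi(x)\phi(z)/C$ for $0<z<x$, which is \emph{unbounded} as $x\to0$ since $\psi(x)\sim|x|^{-1/(2\varepsilon)}$. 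The kernel is in fact bounded, but for the opposite reason: the factor $1/(f(z)W(z))\propto g^{1/\varepsilon}(z)$ vanishes at exactly the rate needed to compensate the singular solution when $z\le x$, which is what makes the ratio $g^{1/\varepsilon}(z)/g^{1/\varepsilon}(x)$ in \eqref{kernelH} bounded. Replace the constancy claim by the identity $(fW)'=-W$ and the resulting ratio estimate (the analogue of \eqref{eq:contat0}), and the Hilbert--Schmidt conclusion is restored.
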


It is convenient to introduce two symmetries commuting with $L$. Let
\[
\mathcal{P}u(x)=u(-x)\qquad \text{and} \qquad\mathcal{T}u(x)=\overline{u(x)},
\]
be the parity and transposition isometries (note that $\mathcal{T}$ is conjugate linear). Then $
\mathcal{P}^2u(x)=\mathcal{T}^2u(x)=u(x)$, and $\mathcal{P}$ and $\mathcal{T}$ leave invariant the subspace $\operatorname{Dom}(L)$. Moreover
\[
    L\mathcal{P}u=-\mathcal{P}Lu \qquad \text{and} \qquad
     L\mathcal{T}u=\mathcal{T}Lu,
\] 
for all $u\in \operatorname{Dom}(L)$. 

Let $\phi\in \operatorname{AC}(-1,0)\cap \operatorname{AC}(0,1)$ and $E\in \mathbb{C}$ be such that
\begin{equation} \label{eq:1}
      (f\phi'+\phi)'=E \phi.
\end{equation}
Then, $(f(\tilde{\phi})'+\tilde{\phi})'=-E\tilde{\phi}$ for $\tilde{\phi}=\mathcal{P}\phi$ and $(f\overline{\phi}'+\overline{\phi})'=\overline{E}\, \overline{\phi}$ for $\overline{\phi}=\mathcal{T}\phi$. Hence, if $E$ is an eigenvalue of $L$ (which additionally requires that $\phi\in\operatorname{Dom}(L)\setminus\{0\}$), then also $-E$ and $\pm \overline{E}$ will be eigenvalues of $L$. Thus, the spectrum of $L$ is symmetric under reflection with respect to the real and imaginary axes. Note that $0$ is always an eigenvalue with corresponding eigenfunction the constant function. Moreover, we have the following analogue of similar results reported in \cite{BLM2010} and \cite{BMR2012} for the case when $f(\pm 1)=0$.

\begin{theorem} \label{lem:specgen}
The spectrum of $L$ is purely discrete and purely imaginary.
\end{theorem}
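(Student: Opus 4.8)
The plan is to treat the two assertions separately. Discreteness is immediate: Lemma~\ref{Lemma1} gives that $L$ is closed with compact resolvent, so its spectrum consists of isolated eigenvalues of finite algebraic multiplicity. It remains to show that every eigenvalue $E$ is purely imaginary, and here I would first reduce the task. Since $L\mathcal{P}=-\mathcal{P}L$, if $E$ is an eigenvalue with eigenfunction $u$ then $-E$ is an eigenvalue with eigenfunction $\mathcal{P}u$. Consequently it is enough to establish the one-sided bound $\operatorname{Re}(E)\le 0$ for every eigenvalue: applying this also to $-E$ forces $\operatorname{Re}(E)=0$.

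The engine would be an energy identity built from the quasi-derivative (flux) $v:=fu'+u$, which lies in $\operatorname{AC}(-1,1)$ and satisfies $v'=Eu$. Continuity of $u$ at the interior singularity forces $v(0)=u(0)$ (otherwise $u'\sim c/x$ and $u$ could not remain bounded), so integrating $\langle Lu,u\rangle$ by parts produces no contribution from $x=0$. Using the periodic condition $u(-1)=u(1)$ one checks that $\int_{-1}^1 u\overline{u'}\,dx$ is purely imaginary, which leads to
\[
  \operatorname{Re}(E)\,\|u\|^2=\operatorname{Re}\big[(v(1)-v(-1))\overline{u(1)}\big]-\int_{-1}^1 f\,|u'|^2\,dx .
\]
The difficulty is plain: because $f$ is odd, the quadratic term $\int_{-1}^1 f|u'|^2$ is \emph{not} sign-definite, and indeed the numerical range of $L$ is the whole plane, so no naive dissipativity is available.

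To extract a sign I would rerun the computation against the weighted test function $\operatorname{sgn}(x)\,u$. The jump of $\operatorname{sgn}$ at the origin contributes a term governed by $v(0)=u(0)$, while, by the oddness of $f$, the offending quadratic term is converted into $-\int_{-1}^1 |f|\,|u'|^2\le 0$. Expressing the endpoint fluxes through $v'=Eu$, namely $v(1)-v(-1)=E\int_{-1}^1 u\,dx$ and $v(1)+v(-1)=2u(0)+E\int_{-1}^1\operatorname{sgn}(x)\,u\,dx$, one obtains a second real identity in which the only terms of indefinite sign are endpoint contributions weighted by $E$ and by the boundary value $u(1)$.

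The main obstacle is therefore to control these residual endpoint/flux terms, which is precisely where the periodic boundary condition enters and where the non-self-adjointness is concentrated. I would close the argument using the solution structure developed in the rest of the paper: for each $E$ the equation $l\phi=E\phi$ has a one-dimensional space of solutions regular at the origin (explicit Bessel functions when $f=2\varepsilon x$, and the general case reduced to this by the transformators), so that $E$ is an eigenvalue exactly when this $\phi_E$ satisfies $\phi_E(1)=\phi_E(-1)$. The symmetry relations $\phi_{-E}(x)=\phi_E(-x)$ and $\overline{\phi_E}=\phi_{\overline{E}}$, fed into the weighted identity, are what should pin down the sign of the endpoint contributions and deliver $\operatorname{Re}(E)\le 0$; the parity reduction above then upgrades this to $\operatorname{Re}(E)=0$. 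An equivalent and perhaps cleaner bookkeeping uses the even/odd splitting forced by $L\mathcal{P}=-\mathcal{P}L$: odd functions in $\operatorname{Dom}(L)$ must vanish at $\pm1$, so $L=\left(\begin{smallmatrix}0&B\\A&0\end{smallmatrix}\right)$ and the whole problem reduces to showing that $\operatorname{spec}(BA)\subset(-\infty,0]$, i.e.\ that $E^2\le 0$.
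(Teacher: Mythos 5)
Your reduction of discreteness to Lemma~\ref{Lemma1} is exactly what the paper does, and your parity observation (it suffices to prove the one\mbox{-}sided bound $\operatorname{Re}E\le 0$, since $L\mathcal{P}=-\mathcal{P}L$ then forces $\operatorname{Re}E=0$) is correct and clean. But the heart of the argument is missing. Your energy identity is fine as far as it goes, and you correctly diagnose that $\int_{-1}^1 f|u'|^2$ has no sign; the proposed fix via the weight $\operatorname{sgn}(x)u$ produces $-\int|f|\,|u'|^2\le 0$ at the cost of endpoint and flux terms $(v(\pm1),\,u(1),\,u(0))$ of indefinite sign, and you explicitly leave these uncontrolled. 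Saying that the symmetries $\phi_{-E}(x)=\phi_E(-x)$ and $\overline{\phi_E}=\phi_{\overline E}$ ``should pin down the sign'' is not an argument: those symmetries only tell you that the spectrum is invariant under $E\mapsto -E$ and $E\mapsto\overline E$, which is already known and is compatible with eigenvalues occurring in quadruples $\{\pm E,\pm\overline E\}$ off both axes. Likewise, the closing remark that the even/odd splitting reduces everything to $\operatorname{spec}(BA)\subset(-\infty,0]$ is a restatement of the claim $E^2\le 0$, not a proof of it; nothing in your proposal shows why $BA$ should have nonpositive spectrum.

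The mechanism the paper actually uses is complex-analytic rather than variational, and none of it appears in your sketch. One characterises eigenvalues by $\Phi(-1;E)=\Phi(1;E)$ for the regular-at-origin solution $\Phi$, shows via the transformator representation \eqref{eq:transK} that $E\mapsto\Phi(1;E)$ is entire of order $\tfrac12$, and — crucially — identifies its zeros with the eigenvalues of a genuinely \emph{self-adjoint} Dirichlet problem for the Liouville normal form \eqref{eq:zLNF3} on $(0,b_2]$, so that the zeros $\tilde E_n$ are real with $\tilde E_n\le -c_1n^2$. Hadamard factorisation then gives $\varphi(i\lambda)/\varphi(\lambda)=\prod_n\bigl(1-\lambda^2/\lambda_n^2\bigr)\big/\bigl(1+\lambda^2/\lambda_n^2\bigr)$ with $\lambda_n^2$ real, and the elementary inequality $|1-\lambda^2/\lambda_n^2|<|1+\lambda^2/\lambda_n^2|$ for $\arg(\pm\lambda)\in(-\tfrac\pi4,\tfrac\pi4)$ forces the eigenvalue condition to hold only on $\arg(\pm\lambda)=\pm\tfrac\pi4$, i.e.\ for purely imaginary $E$. (In the linear case $f=2\varepsilon x$ the same conclusion is reached by Phragm\'en--Lindel\"of applied to $J_m(i\lambda)/J_m(\lambda)$.) Without the order-$\tfrac12$ growth bound and the reality of the zeros — neither of which your energy approach supplies — the argument does not close, so as it stands the proposal has a genuine gap precisely at the statement it set out to prove.
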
 

The first statement is immediately implied by Lemma~\ref{Lemma1}. The proof of the second statement will be given in Section~\ref{section:3}.

The pseudospectrum of $L$ is also symmetric with respect to the axes. 
This follows from the fact that
\[
       \mathcal{P}(L-E)^{-1}\mathcal{P}=-(L+E)^{-1} \qquad \text{and} \qquad \mathcal{T}(L-E)^{-1}\mathcal{T}=(L-\overline{E})^{-1}.
\]
Indeed, these identities imply that
\[
     \|(L-E)^{-1}\|=\|(L+E)^{-1}\|=\|(L\mp \overline{E})^{-1}\|
\]
for all $E$ in the resolvent set of $L$. 

The upper bound on the resolvent norm given in the next theorem, which we will prove at the end of the paper in Section~\ref{sec6}, is a consequence of general Carleman-type bounds and the fact that the resolvent of $L$ is in the Schatten classes $\mathcal{C}_{p}$ for all $p>\frac23$.

\begin{theorem} \label{th:upperboundpseudospectrum}
Let $\alpha\not\in \Big\{\frac{(2k+1)\pi}{2}\Big\}_{k\in \mathbb{Z}}$ be fixed. For all $p>\frac23$, there exist constants $a,c>0$ such that
\[
     \|(L-|E|\mathrm{e}^{i\alpha})^{-1}\|< c\exp\left[a|E|^{p}\right]
\] 
for all $|E|\geq 1$. The constants $a$ and $c$ can be chosen uniformly for $\alpha$ on a compact set. Both depend on $f(\cdot)$.
\end{theorem}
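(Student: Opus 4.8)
The plan is to combine three ingredients: the resolvent identity, a Carleman-type resolvent estimate for operators in the Schatten class $\mathcal{C}_p$, and a lower bound along the ray $\arg E=\alpha$ for an associated Fredholm determinant whose zeros are the eigenvalues of $L$.

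First I would fix a real number $E_0\neq 0$. Since the spectrum of $L$ is purely imaginary by Theorem~\ref{lem:specgen} and its only real eigenvalue is $0$, such an $E_0$ lies in the resolvent set, and $R:=(L-E_0)^{-1}$ is compact with $R\in\mathcal{C}_p$ for every $p>\tfrac23$. Writing $L-E=(L-E_0)\bigl(I-(E-E_0)R\bigr)$ gives
\[
\|(L-E)^{-1}\|\le \|R\|\,\bigl\|(I-zR)^{-1}\bigr\|,\qquad z=E-E_0,
\]
so it suffices to control $\|(I-zR)^{-1}\|$ for $z$ on the shifted ray.

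Next I would invoke a Carleman-type bound (as in Gohberg--Krein or Simon's trace-ideal theory): the Fredholm determinant $D(E):=\det\bigl(I-(E-E_0)R\bigr)=\prod_n\frac{\lambda_n-E}{\lambda_n-E_0}$ is entire, its zeros are exactly the eigenvalues $\{\lambda_n\}\subset i\mathbb{R}$ of $L$, and one has a bound of the form
\[
\bigl\|(I-zR)^{-1}\bigr\|\le \frac{\exp\bigl(C_p|z|^{p}\|R\|_p^{p}\bigr)}{|D(E)|}.
\]
Because $R\in\mathcal{C}_p$ for every $p>\tfrac23$, Weyl's inequality yields $\sum_n|\lambda_n-E_0|^{-p}<\infty$ for all such $p$, so $D$ has order at most $\tfrac23$ and the numerator is at most $\exp(c|E|^{p})$ once $|E|\ge 1$. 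The problem thus reduces to establishing the lower bound $|D(E)|\ge\exp(-a|E|^{p})$ on the ray.

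This determinant lower bound is the heart of the argument and the main obstacle. Writing $\lambda_n=it_n$ and $E=|E|\mathrm{e}^{i\alpha}$, I would split $\log|D(E)|=\sum_n\bigl(\log|\lambda_n-E|-\log|\lambda_n-E_0|\bigr)$ into near zeros ($|t_n|\le 2|E|$) and far zeros. For the near zeros the angular separation $|\cos\alpha|\ge\delta>0$, valid uniformly once $\alpha$ avoids a neighbourhood of $\pm\tfrac{\pi}{2}$, forces $|\lambda_n-E|\ge\delta|E|$ and bounds each term below by a constant; multiplied by the counting estimate $N(2|E|)=O(|E|^{2/3+\epsilon})$ this contributes $\ge-O(|E|^{2/3+\epsilon})$. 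For the far zeros the symmetry of the spectrum under $\lambda\mapsto-\lambda$, so that eigenvalues occur in pairs $\pm it_n$, is decisive: the leading $O(|E|/t_n)$ contributions cancel pairwise, and the surviving $O(|E|^{2}/t_n^{2})$ terms sum to $O(|E|^{2/3})$ by the same density estimate. Without this cancellation one would only reach exponent $1$, which is too weak; it is precisely the symmetry built into $L$ that produces the subcritical exponent $p$. Collecting the two estimates gives $\log|D(E)|\ge-a|E|^{p}$, and combining this with the previous paragraph yields $\|(L-E)^{-1}\|\le c\exp(a|E|^{p})$, with $a,c$ depending only on $\delta$, $E_0$, $f$ and $\|R\|_p$, hence uniform for $\alpha$ on a compact set avoiding $\bigl\{\tfrac{(2k+1)\pi}{2}\bigr\}_{k\in\mathbb{Z}}$.
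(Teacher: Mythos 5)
Your argument is sound and reaches the stated bound, but it takes a genuinely different route from the paper at the decisive step. The paper also factors through the Schatten membership $(L-E_0)^{-1}\in\mathcal{C}_p$ (which it proves separately as Lemma~\ref{LemmaSchatten}, via a factorisation $S=TV$ and Schatten interpolation, and which you assume without comment), but it then invokes a packaged Carleman-type inequality of Bandtlow, $\|(B-z)^{-1}\|\leq \mathrm{dist}(z,\operatorname{Spec}B)^{-1}\exp\left[a_1\,\mathrm{dist}(z,\operatorname{Spec}B)^{-p}+d_1\right]$, applied to $B=(L-i\eta)^{-1}$ with $\eta$ half the smallest nonzero eigenvalue modulus; the whole proof then reduces to the elementary two-sided estimate $\mathrm{dist}\big(\tfrac{1}{E-i\eta},\operatorname{Spec}B\big)\asymp |E|^{-1}$, which uses only that $\operatorname{Spec}L\subset i\mathbb{R}$ and $\cos\alpha\neq 0$. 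You instead keep the raw determinant form of the Carleman bound and prove the lower bound $|D(E)|\geq \exp(-a|E|^{p})$ by hand, splitting into near and far zeros and feeding in the counting estimate $N(r)=O(r^{2/3+\epsilon})$ obtained from Weyl's inequality. Your route is essentially an unpacking of what the cited reference proves in general, so it is more self-contained but longer; the paper's route is shorter and cleanly isolates where the geometry of the spectrum enters. Two remarks on your version. First, your claim that the pairwise cancellation from the symmetry $\lambda\mapsto-\lambda$ is ``decisive'' and that without it one would only reach exponent $1$ is not correct: for far zeros $|t_n|>2|E|$ one has $\sum_{|t_n|>2|E|}|E|\,|t_n|^{-1}\leq |E|\,(2|E|)^{p-1}\sum_n|t_n|^{-p}=O(|E|^{p})$ directly from the density estimate, so the first-order terms are already harmless without any cancellation (the cancellation argument you give is nevertheless valid). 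Second, you should record that the Schatten bound and the purely imaginary spectrum are genuine inputs (Lemma~\ref{LemmaSchatten} and Theorem~\ref{lem:specgen} of the paper), and, as the paper does, dispose of the compact range $1\leq|E|\leq R_0$ by continuity of the resolvent norm on the ray, which avoids the spectrum there.
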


The main result of this paper is the next theorem, which confirms that the resolvent norm is indeed exponentially large away from the spectrum. It also gives concrete evidence about the shape of the pseudospectrum of $L$. 

\begin{theorem} \label{th:thepseudospectrum}
Let $\alpha\not\in \{\frac{k\pi}{2}\}_{k\in \mathbb{Z}}$ be fixed. There exist constants $a,c>0$ such that
\[
    \|(L-|E|\mathrm{e}^{i\alpha})^{-1}\|>c\exp\left[a |E|^{\frac12} \left| \sin \frac{\alpha}{2}\right|\right] 
\]
for all $|E|\geq 1$. The constant $a$ is independent of $\alpha$ and the constant $c$ can be chosen uniformly for $\alpha$ on a compact set. Both depend on $f(\cdot)$.
\end{theorem}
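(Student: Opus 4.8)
The plan is to bound the resolvent norm from below by producing a genuine pseudo-mode: a vector $u\in\mathrm{Dom}(L)$ for which $\|(L-E)u\|$ is exponentially small relative to $\|u\|$ as $E=|E|\mathrm{e}^{i\alpha}$ escapes to infinity, since then $\|(L-E)^{-1}\|\ge \|u\|/\|(L-E)u\|$. First I would treat the model coefficient $f(x)=2\varepsilon x$. Here $(f\phi'+\phi)'=E\phi$ becomes $2\varepsilon x\phi''+(2\varepsilon+1)\phi'-E\phi=0$, and the substitution $\xi=2\sqrt{Ex/(2\varepsilon)}$ turns it into a modified Bessel equation of order $\nu=1/(2\varepsilon)$. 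Its unique (up to scale) solution regular at the interior singularity $x=0$ is $\phi(x)=\xi^{-\nu}I_\nu(\xi)$, which is in fact \emph{entire} in $x$ because it is a power series in $\xi^2=2Ex/\varepsilon$; this explicit closed form is the backbone of the whole argument.

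The second step is a uniform large-$|E|$ analysis of $\phi$ along the ray $\arg E=\alpha$. Using $I_\nu(\xi)\sim \mathrm{e}^{\xi}/\sqrt{2\pi\xi}$ (valid since $|\arg\xi|<\tfrac{\pi}{2}$ on each half-line), $|\phi|$ is minimal near the origin and grows exponentially toward the two endpoints, with $\log|\phi(x)|\approx 2\sqrt{|E|\,|x|/(2\varepsilon)}\,|\cos(\tfrac{\alpha}{2})|$ for $x>0$ and $\log|\phi(x)|\approx 2\sqrt{|E|\,|x|/(2\varepsilon)}\,|\sin(\tfrac{\alpha}{2})|$ for $x<0$. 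Thus the exponential $L^2$-mass of $\phi$ sits in boundary layers at $x=\pm1$, and the rate $|\sin(\tfrac{\alpha}{2})|$ appearing in the theorem is precisely the growth rate of the regular solution on the negative half-line.

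Third, I would build the periodised pseudo-mode. Since any $u\in\mathrm{Dom}(L)$ need only satisfy $u(-1)=u(1)$ (there is no flux matching imposed at the endpoints), I multiply $\phi$ by smooth cut-offs supported away from a fixed neighbourhood of $x=0$ — exactly where $\phi$ is smallest — and rescale the two halves so that the endpoint values coincide; this is the role of the `periodiser'. The identity $(L-E)(\chi\phi)=\chi'(f'\phi+2f\phi'+\phi)+f\chi''\phi$ shows that $(L-E)u$ is supported only on the transition region near the origin, where $\phi$ and $\phi'$ are exponentially smaller than the boundary values that dominate $\|u\|$; estimating $\|u\|$ and $\|(L-E)u\|$ and taking the quotient then yields $\|(L-E)^{-1}\|\ge c\,\exp\!\big[a\,|E|^{1/2}|\sin(\tfrac{\alpha}{2})|\big]$, at least when $|\sin(\tfrac{\alpha}{2})|$ is the governing (slower) of the two endpoint rates, i.e. near the imaginary axis. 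The reflection symmetries $\|(L-E)^{-1}\|=\|(L+E)^{-1}\|=\|(L-\overline{E})^{-1}\|$ should then be used to position $\alpha$ in the half-plane where the construction is most effective and to export the bound to the remaining directions.

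The last step removes the restriction $f(x)=2\varepsilon x$: I would invoke transmutation (transformation) operators with an $E$-independent kernel that intertwine the Bessel model operator with the operator for the general coefficient, so that the regular solution for general $f$ inherits the same leading exponential asymptotics as the model, with controlled lower-order corrections. The main obstacle, as I see it, is twofold. First, one must make the Bessel — and then transmuted — asymptotics \emph{uniform} in the complex parameter, and in particular control the degeneration as $\alpha\to\tfrac{\pi}{2}$, where the growth rate on one half-line collapses and the construction approaches the spectrum; this is also where reconciling the $\sin$–rate with the symmetries is most delicate. Second, one must check that the transmutation error together with the periodiser commutator remains exponentially subdominant to the boundary-layer mass, so that the quotient estimate — and hence the exponential lower bound with a constant $a$ independent of $\alpha$ — survives intact for the general coefficient $f$.
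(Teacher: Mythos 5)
Your proposal follows essentially the same route as the paper: the regular-at-origin Bessel solution for $f(x)=2\varepsilon x$ (your $\xi^{-\nu}I_\nu(\xi)$ is the paper's $x^{-m/2}J_m(\lambda x^{1/2})$ up to normalisation), its growth rates $|\cos\frac{\alpha}{2}|$ and $|\sin\frac{\alpha}{2}|$ on the two half-lines, a periodiser glued near the interior singularity with the same commutator identity, reduction to one quadrant by the $\mathcal{P}$ and $\mathcal{T}$ symmetries, and transmutation operators with $E$-independent kernel for general $f$. The only cosmetic difference is that the paper places the transition region in a window of width $|\lambda|^{-2}$ adjacent to the origin (so that $\Phi$ and $\Phi'$ are only polynomially large there and the numerator is $O(|\lambda|)$), rather than in a fixed neighbourhood; both choices yield the stated exponential lower bound.
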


By making $|E|^{\frac12} \sin \frac{\alpha}{2}$ equal to a constant, then solving for $\alpha$ in terms of $|E|$, we observe that if $E=|E|\mathrm{e}^{i\alpha}$, then
\[|\operatorname{Re}(E)|\sim k_1 |E| \qquad \text{and} \qquad |\operatorname{Im}(E)|\sim k_2 |E|^{\frac12}\]
as $|E|\to \infty$ for the exponential term to become constant on the right hand side. This provides further evidence that the pseudospectra of the operator $L$ include regions that, in the regime $|E|\to\infty$, have boundaries asymptotic to parabolas with directrices the imaginary line and axes of symmetry the real line. This is consistent with the graphs reported in the Figure~6 of \cite{BOS2003} for $f(x)=\frac{2\varepsilon}{\pi}\sin(x)$ and those included in pages 124-125 and 406-408 of \cite{TE2005} for a perturbation of the operator. It also confirms the phenomenon observed in those graphs
that the spacing of the pseudospectral boundaries for level lines $10^{-n}$ is close to linear in $n$.

Davies and Kuijlaars were the first to observe exponential growth of the coefficients in spectral projections for Schr{\"o}dinger operators with complex potential on $\mathbb{R}$, for the complex harmonic oscillator \cite{DaviesHarmOsc}. This work was extended and refined by Henry \cite{Henry1,Henry2}.
Davies \cite{Davies-semiclassical} later used semi-group methods to offer a semi-classical analysis of resolvent growth for families of operators at fixed spectral parameter which, for the particular case of the complex harmonic oscillator, may be combined with a dilation trick to recover exponential resolvent growth results for a fixed operator at large energies. In fact the semi-classical study of pseudospectra at fixed energy had already been taken up independently by Dencker, Sj\"{o}strand and Zworski \cite{DSZ} and became a topic of active research with contributions by Dencker \cite{Dencker},
Helffer \cite{Helffer}, Pravda-Starov \cite{PS}, Hitrik, Sj\"{o}strand and Viola \cite{HSV}, Galkowski \cite{Galk} and Almog and Henry \cite{AH},
among others. More recently, for problems in which there is no semi-classical parameter but the energy goes to infinity, Krej\v{c}i\v{r}\'{\i}k, Siegl, Tater and Viola \cite{MR3416732}, and  Arnal and Siegl \cite{AS} have shown that the exponential growth of the resolvent norm is sharp and prevalent. Their methods do not require smooth potentials, but rather a sufficiently rapid growth of the potential at infinity. 
Mityagin, Siegl and Viola have also returned to the spectral projection results in  \cite{DaviesHarmOsc} and generalised them to a wider class of 1D operators \cite{MR3614165}.

For periodic boundary conditions, the numerical evidence presented by Trefethen in \cite{Trefethen2005} indicates that similar resolvent-norm-growth behaviour also holds for classes of semi-classical Schr{\"o}dinger operators on finite intervals. The article also gives some insights into the shape of the pseudo-modes which might be used to try to prove such a result rigorously.

Our results are not semi-classical. The nature of our problem is such that we are not aware of any simple, rigorous
argument that might be used to obtain large-energy results from fixed-energy semi-classical results for a suitable
family of operators. The proof of Theorem~\ref{th:thepseudospectrum} below involves constructing pseudo-modes which look similar to
those in  \cite{Trefethen2005} away from $x=0$. However, they have the property that there is a jump in the derivatives of order three near the singularity at $x=0$. In Section~\ref{section:4} we present this construction and establish Theorem \ref{th:thepseudospectrum}, for leading order coefficient $f(x)=2\varepsilon x$. In the Appendix~\ref{ApB} we include graphs illustrating the different shapes of these pseudo-modes in terms of $E$.

For general $f$, we first transform the differential equation into a Schr\"{o}dinger equation (Section~\ref{section:4}) 
with singularity of Calogero-type at the origin and a bounded potential. Unfortunately the role of this transformed equation in the 
analysis is such that we cannot use the pseudo-mode constructions of Krej\v{c}i\v{r}\'{\i}k and Siegl \cite{KS2017}.
The singular part is already present for the special case $f(x)=2\varepsilon x$;
the bounded potential comes from the difference $f(x)-2\varepsilon x$. The transformation of the differential equation involves a number of
intermediate steps, so to aid the reader we have included a summary of the variables used and relations between them in Table~\ref{table1} at the end of the paper. 
Having performed these transformations, we use special transformator kernels from \cite{Kravchenko} in Section~\ref{section:3}
to quantify rather precisely the effect of the presence of the bounded potential upon the solutions, and complete the proof of
Theorem~\ref{th:thepseudospectrum} in Section~\ref{section:5}.

\section{The closed operator $L$ \label{sec2}}

In the first part of this section, we show that the solutions to \eqref{eq:1} for any fixed $E\in\mathbb{C}$ are characterised via a change of variables by an ODE whose asymptotic behaviour is driven by Bessel's equation. From this, we will eventually identify a solution which is
bounded. In the forthcoming sections, this bounded solution will determine eigenfunctions and pseudo-modes associated with $L$.

In order to describe the solutions $\phi$ for $E\in \mathbb{C}$, without loss of generality we restrict the variable to $x>0$, then invoke the symmetry $\mathcal{P}$ to determine $\phi(x)$ for $x<0$ from the solution for $-E$. So, set
$
   \phi(x)=F(g(x))
$ 
for $x\in [0,1]$ where $g(\cdot)$, independent of $E$ and $\varepsilon$, is determined up to a scaling by 
\begin{equation}\label{master_eq_g}f(x)g'(x)=\varepsilon g(x). \end{equation}
Then, with $b_1=g(1)$, it follows that $F:[0,b_1]\longrightarrow \mathbb{R}$ must solve
\begin{equation} \label{superBessel}
    (\varepsilon y F'(y))'+F'(y)=E h(y)F(y) \qquad \text{where} \qquad
h(y)=\frac{f\left(g^{-1}(y)\right)}{\varepsilon y}
\end{equation}
is independent of $E$ and $\varepsilon$. 
Indeed, observe that 
 $f(x)\phi'(x)=\varepsilon g(x)F'(g(x))$
and that $(f(x)\phi'(x))'=\varepsilon g'(x)[yF'(y)]'_{y=g(x)}$. As
\[
    E\phi(x)=(f(x)\phi'(x))'+\phi'(x)=g'(x)[(\varepsilon yF'(y))'+F'(y)]_{y=g(x)},
\]
re-ordering this and writing everything in terms of the variable $y$ shows that $F(y)$ satisfies \eqref{superBessel}.
Here and elsewhere the variable $y\in [0,b_1]$. The constant $b_1$ 
is fixed by Lemma \ref{lem:2} below and depends on $r(\cdot)$ but not on $E$ or $\varepsilon$: for example, $b_1=1$ for $r(x)=0$. 

The following lemma fixes the scaling of $g$ and gives its asymptotic behaviour near the origin.
\begin{lemma} \label{lem:2}
The equation \eqref{master_eq_g} has a positive, increasing solution $g\in \mathrm{C}^3((0,1])$, such that\footnote{Here and in all places below, the expression $a(w)\sim b(w)$ as $w\to c$ means that $\displaystyle\lim_{w\to c}\frac{a(w)}{b(w)}=1$. The limit is taken in context, depending on whether $w$ lies in a real or complex set.} $g(x)\sim x^{\frac12}$ and
$g'(x) \sim \frac12 x^{-\frac12}$ as $x\to 0$.
\end{lemma}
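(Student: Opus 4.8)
The plan is to solve \eqref{master_eq_g} explicitly by separation of variables, exploiting the fact that $f$ is strictly positive on $(0,1]$, so that the equation is nonsingular away from the origin, and that the standing decomposition of $1/f$ cleanly isolates the singular $1/x$ part from a regular remainder.

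First I would rewrite \eqref{master_eq_g} as $g'(x)/g(x)=\varepsilon/f(x)$ and substitute the identity $\frac{\varepsilon}{f(x)}=\frac{1}{2x}+\frac{r(x)}{2}$ coming from the assumption $\frac{1}{f(x)}=\frac{1}{2\varepsilon x}+\frac{r(x)}{2\varepsilon}$. Setting $R(x)=\int_0^x r(t)\,dt$ — which is well defined because $r$ is continuous, is even because $r$ is odd, vanishes to second order at the origin, and lies in $\operatorname{C}^3$ because $r\in\operatorname{C}^2$ — integration yields the one-parameter family of solutions
\[
   g(x)=A\,x^{1/2}\exp\!\Big(\tfrac12 R(x)\Big),\qquad A>0.
\]

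Next I would fix the scaling. Since $R(0)=0$, we have $g(x)/x^{1/2}=A\exp(\tfrac12 R(x))\to A$ as $x\to 0^+$, so choosing $A=1$ gives exactly $g(x)\sim x^{1/2}$ and pins down $g$ uniquely. Positivity and monotonicity are then immediate: $g(x)=x^{1/2}\exp(\tfrac12 R(x))>0$ on $(0,1]$, and feeding this back into the equation gives $g'(x)=\varepsilon g(x)/f(x)>0$, since $\varepsilon$, $f(x)$ and $g(x)$ are all positive there. The regularity $g\in\operatorname{C}^3((0,1])$ follows because $x^{1/2}$ is smooth on $(0,1]$ and $R\in\operatorname{C}^3$; alternatively one can bootstrap directly from $g'=\varepsilon g/f$ using $f\in\operatorname{C}^2$, which also shows that $\operatorname{C}^3$ is exactly the regularity the hypotheses permit. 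Finally, the derivative asymptotics follow from the equation itself rather than by differentiating the explicit formula: $g'(x)=\varepsilon g(x)/f(x)=g(x)\big(\tfrac{1}{2x}+\tfrac{r(x)}{2}\big)$, and since $g(x)\sim x^{1/2}$ while the bracket is asymptotic to $\tfrac{1}{2x}$, we obtain $g'(x)\sim\tfrac12 x^{-1/2}$ as $x\to 0^+$.

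As for obstacles, there is no serious analytic difficulty, since the equation is a separable first-order ODE. The only point requiring a little care is the clean handling of the singularity at the origin — specifically choosing the antiderivative $R$ with $R(0)=0$ so that the scaling constant is pinned down correctly and the two asymptotic relations hold with limits exactly $1$ and $\tfrac12$, rather than merely up to an undetermined multiplicative constant.
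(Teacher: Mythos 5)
Your proposal is correct and follows essentially the same route as the paper: separate variables via $(\log g)'=\tfrac{1}{2x}+\tfrac{r(x)}{2}$, integrate the regular part of $1/f$, and normalise so that $g(x)x^{-1/2}\to 1$ (the paper uses the antiderivative $\tfrac12\int_1^x r$ and subtracts its value at $0$, which is the same normalisation as your $\tfrac12\int_0^x r$). The remaining claims (positivity, monotonicity, $\mathrm{C}^3$ regularity, and the asymptotics of $g'$ read off from the ODE itself) are handled just as in the paper.
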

\begin{proof}
A solution to \eqref{master_eq_g} is $g(x)=\tilde{g}(x)$ such that
\[
       (\log \tilde{g})'(x)=\frac{1}{2x}+\frac{r(x)}{2}.
\]
Hence we may choose
\[
     \log \tilde{g}(x)=\log x^{\frac12}+r_2(x) \qquad \text{where} \qquad r_2(x)=\frac12 \int_1^x r(s)\,\mathrm{d}s
\]
so that $r_2(x)\sim \frac{a_1}{4} x^2 + a_2$ and $r_2(x)$ is an even analytic  function at $x=0$. This, and the fact that \eqref{master_eq_g} is homogeneous, give a solution 
\begin{equation} \label{eq:functiong}
    g(x)=x^{\frac12}\mathrm{e}^{r_2(x)-a_2}.
\end{equation}
The latter is such that, $g(x)\sim x^{\frac12}$ and $g'(x)\sim \frac{1}{2}x^{-\frac12}$ as $x\to 0$. Moreover, $g(x)$ is $\mathrm{C}^3$, increasing and positive. This follows from the equation and the fact that $f(x)$ is positive for $x>0$ and twice continuously differentiable.
\end{proof}

If we pick $g(x)$ as in this lemma to determine the equation \eqref{superBessel}, then $h(y)$ on the right hand side there is well defined, twice continuously differentiable and
\begin{equation} \label{asympt_h}
    h(y)=2y+O(y^3), \qquad y\to 0.
\end{equation}
Indeed, set $x=g^{-1}(g(x))\sim g^{-1}(x^{\frac12})$. From \eqref{eq:functiong} and the fact that $r_2(x)$ is even and analytic near $x=0$, we have
\[
       g(x)=x^{\frac12} (1+a_2x^2+a_4 x^4+\cdots)
\]
in a neighbourhood of $x=0$. Calling $x=z^2$ and $y=z(1+a_2 z^4+a_4z^8+\cdots)$, gives $z^2=y^2+O(y^4)$. Therefore,
\[
       g^{-1}(y)=y^2+\rho_1(y) \qquad \text{where} \qquad \rho_1(y)=O(y^4).
\]
Thus, the asymptotic behaviour of $f(x)$ and a substitution into the expression for $h(y)$ gives \eqref{asympt_h}.  

As we shall see later, from \eqref{asympt_h} it follows that the behaviour of $\phi(x)$ near $x=0$ will be driven by a Bessel function. This should be expected by re-writing \eqref{superBessel} as
\begin{equation} \label{superBessel2}
     \varepsilon (yF'(y))'+F'(y)=2E (y+\rho_3(y))F(y)
\end{equation} 
where $\rho_3(y)=O(y^3)$ as $y\to 0$.  

Without further mention, everywhere below we will denote by $g:[-1,1]\longrightarrow \mathbb{R}$ the following function. For $x\geq 0$, $g(x)$ will be the solution to \eqref{master_eq_g} with the behaviour near $x=0$ as in Lemma~\ref{lem:2}.
For $x<0$, $g(-x)=g(x)$. In this convention, we then have
\[
    g\in \mathrm{C}([-1,1]) \cap \mathrm{C}^3\big([-1,0)\cup (0,1]\big).
\]

In the remaining part of this section we give the proof of Lemma~\ref{Lemma1}. It will follow from the next statement characterising $\operatorname{Dom}(L)$.

\begin{lemma} \label{lem:Green}
The function $u$ lies in $\operatorname{Dom}(L)$ and satisfies
\begin{equation} \label{eq:bvp}
(fu'+u)'=v
\end{equation} 
for $v\in \operatorname{L}^2(-1,1)$ if and only if 
\begin{equation} \label{eq:orthogonality}
     \int_{-1}^1 \left(g^{\frac{1}{\varepsilon}}(1)-g^{\frac{1}{\varepsilon}}(z)\right)v(z)\mathrm{d}z=0
\end{equation}
and  
\begin{equation} \label{eq:Green}
     u(x)=k+\int_0^x \left( 1-\frac{g^{\frac{1}{\varepsilon}}(z)}{g^{\frac{1}{\varepsilon}}(x)}\right)v(z)\mathrm{d}z 
\end{equation}
for some $k\in \mathbb{C}$.
\end{lemma}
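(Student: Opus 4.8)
The plan is to treat \eqref{eq:bvp} as a first order equation solved by the integrating factor $g^{\frac1\varepsilon}$. The engine is the observation that the defining relation \eqref{master_eq_g}, $fg'=\varepsilon g$, is equivalent to $\big(g^{\frac1\varepsilon}\big)'=g^{\frac1\varepsilon}/f$, so that $g^{\frac1\varepsilon}$ is exactly the integrating factor for the expression $u\mapsto fu'+u$; by the even extension of $g$ and the oddness of $f$ this identity holds for all $x\in[-1,1]\setminus\{0\}$. I would record at the outset the two facts that control the singularity: $g(x)\sim x^{\frac12}$ from Lemma~\ref{lem:2}, so $g^{\frac1\varepsilon}(x)\to 0$ as $x\to0$; and, by Cauchy--Schwarz with $v\in\operatorname{L}^2$, $\big|\int_0^x g^{\frac1\varepsilon}(z)v(z)\,\mathrm{d}z\big|\le \|v\|_{\operatorname{L}^2}\big(\int_0^x g^{\frac2\varepsilon}\big)^{\frac12}=O(x^{\frac{1}{2\varepsilon}+\frac12})$, whence $g^{-\frac1\varepsilon}(x)\int_0^x g^{\frac1\varepsilon}v=O(x^{\frac12})\to0$.

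For the necessity direction, suppose $u\in\operatorname{Dom}(L)$ and $(fu'+u)'=v$. Since $fu'+u\in\operatorname{AC}(-1,1)$ it extends continuously to the origin, and integrating once gives $fu'+u=c_0+\int_0^x v$ with $c_0:=(fu'+u)(0)$. I would then multiply by the integrating factor: on $(0,1]$ one has $\big(g^{\frac1\varepsilon}u\big)'=\big(g^{\frac1\varepsilon}\big)'\big(c_0+\int_0^\cdot v\big)$, continuous on every $[\delta,1]$. Integrating over $[\delta,x]$, integrating by parts, and letting $\delta\to0^+$ — at which point the boundary contributions vanish because $g^{\frac1\varepsilon}(\delta)\to0$ while $u$ and $\int_0^\cdot v$ stay bounded — yields exactly \eqref{eq:Green} with $k=c_0=u(0)$. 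The same computation on $[-1,0)$, using that $g$ is even, gives the formula there.

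For sufficiency I would argue by direct differentiation of \eqref{eq:Green}. Writing $u=k+\int_0^x v-g^{-\frac1\varepsilon}\int_0^x g^{\frac1\varepsilon}v$ and using $(g^{-\frac1\varepsilon})'=-g^{-\frac1\varepsilon}/f$, the two derivative terms combine so that $fu'+u=k+\int_0^x v$; hence $fu'+u\in\operatorname{AC}(-1,1)$ and $(fu'+u)'=v$. The continuity of $u$ at the origin, so that $u\in\operatorname{C}([-1,1])$ with $u(0)=k$, is precisely the $O(x^{\frac12})$ estimate recorded above. It then remains to match the periodic boundary condition: evaluating \eqref{eq:Green} at $x=1$ and $x=-1$, and using $g(-1)=g(1)$ together with the evenness of $g^{\frac1\varepsilon}$, the identity $u(1)=u(-1)$ is equivalent, after multiplying through by $g^{\frac1\varepsilon}(1)$, to the orthogonality condition \eqref{eq:orthogonality}.

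The main obstacle is the interior singularity at $x=0$: everything hinges on showing that the solution of the first order equation extends continuously across $0$ and that no multiple of the singular homogeneous solution $g^{-\frac1\varepsilon}$, which blows up like $x^{-\frac1{2\varepsilon}}$, is present. In the necessity direction this amounts to justifying the passage $\delta\to0^+$, and in the sufficiency direction to the continuity of $u$ at $0$; both reduce to the sharp asymptotic $g\sim x^{\frac12}$ of Lemma~\ref{lem:2} combined with $v\in\operatorname{L}^2$, after which the remaining manipulations are routine. As an alternative to the explicit limit in the necessity argument, one may note that $u$ minus the right-hand side of \eqref{eq:Green} solves $f\psi'+\psi=0$ on $(0,1)$, hence equals $C\,g^{-\frac1\varepsilon}$, and continuity of both sides at $0$ forces $C=0$.
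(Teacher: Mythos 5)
Your proposal is correct and follows essentially the same route as the paper: solve the first-order equation for $fu'+u$ explicitly via the integrating factor $g^{\frac{1}{\varepsilon}}$, use $g(x)\sim x^{\frac12}$ together with Cauchy--Schwarz to control the interior singularity and obtain continuity at $0$, and read off \eqref{eq:orthogonality} from $u(-1)=u(1)$ and the evenness of $g$. The only (harmless) difference is the mechanism for excluding the singular homogeneous solution $g^{-\frac{1}{\varepsilon}}$ in the necessity direction: the paper invokes $u\in\operatorname{L}^2(-1,1)$ together with $g^{-\frac{1}{\varepsilon}}\notin\operatorname{L}^2(0,\delta)$, whereas you use the boundedness of $u$ near $0$ together with $g^{\frac{1}{\varepsilon}}(\delta)\to 0$ when passing to the limit $\delta\to 0^{+}$.
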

\begin{proof}
We begin the proof by describing the solutions to the homogeneous equation $(f\phi'+\phi)'=0$.
Putting $E=0$ in \eqref{superBessel}, then substituting back $\phi(x)=F(g(x))$, gives a general solution of the form 
\[
     \phi(x)=Ag^{-\frac{1}{\varepsilon}}(x)+B
\]
for all $x\in (0,1)$. Now $\mathcal{P}\phi(x)=\phi(-x)$ must also solve this homogeneous equation. 
Thus a full solution such that $\phi,\,f\phi'+\phi \in \operatorname{AC}(-1,0)\cup \operatorname{AC}(0,1)$ is
\[
     \phi(x)=\begin{cases} A^+ g^{-\frac{1}{\varepsilon}}(x)+B^+, & x\in (0,1), \\
A^{-} g^{-\frac{1}{\varepsilon}}(x)+B^-, & x\in (-1,0).\end{cases}
\]
Now, by variation of parameters, we have that for any $v\in L^1_{\operatorname{loc}}((-1,0)\cup(0,1))$, the solution to \eqref{eq:bvp}
will have to be
\begin{equation} \label{eq:varipar}
     u(x)=-g^{-\frac{1}{\varepsilon}}(x) \left(\int_{\pm 1}^x v(z) g(z)^{\frac{1}{\varepsilon}}\,\mathrm{d}z +k_2^{\pm}   \right)+\left(\int_{\pm 1}^x v(z)\,\mathrm{d}z +k_1^{\pm}\right)
\end{equation}
where the sign is chosen as `$+$' for $x\in (0,1)$ and as `$-$' for $x\in(-1,0)$. With this formula at hand, consider the claim of the lemma.

Let us show the `only if' direction first. Let $v\in \operatorname{L}^2(-1,1)$ and $u\in\operatorname{Dom}(L)$ be related by the identity \eqref{eq:bvp}. By virtue of Lemma~\ref{lem:2}, and since $\frac{1}{\varepsilon}>1$, we know that
\[
 g^{-\frac{1}{\varepsilon}}\not \in \operatorname{L}^2(0,\delta)
\]
for any $\delta>0$. Since $\operatorname{Dom}(L)\subset \operatorname{L}^2(-1,1)$, then necessarily 
\[
    \int_{\pm 1}^0 v(z) g(\pm z)^{\frac{1}{\varepsilon}}\,\mathrm{d}z +k^{\pm}_2=0.
\]
Hence,
\begin{equation} \label{eq:star}
     u(x)=-g^{-\frac{1}{\varepsilon}}(x)\int_0^x v(z)g^{\frac{1}{\varepsilon}}(z)\,\mathrm{d}z+ \int_0^x v(z)\,\mathrm{d}z+k^{\pm}.
\end{equation}
Now, the integrals on the right hand side of this expression are absolutely continuous functions of $x\in[-1,1]$. Moreover, 
\begin{equation} \label{eq:contat0}
\begin{aligned}
   \left|g^{-\frac{1}{\varepsilon}}(x)\int_0^x v(z) g(z)^{\frac{1}{\varepsilon}}\,\mathrm{d}z\right| &\leq |g(x)|^{-\frac{1}{\varepsilon}}\,\|v\| \left(\int_0^x|g(z)|^{\frac{2}{\varepsilon}}\,\mathrm{d}z    \right)^{\frac12} \leq c_3\|v\|\,|x|^{\frac12}
\end{aligned}
\end{equation}
as $x\to 0$. Therefore, since $u$ is continuous, $k^+=k^-$. This yields
\eqref{eq:Green}. Finally, since $u(-1)=u(1)$ and $g$ is an even function,
it follows that \eqref{eq:orthogonality} should hold true.

Now let us show the `if' part. Assume that $v\in \operatorname{L}^2(-1,1)$ satisfies \eqref{eq:orthogonality} and that $u(x)$ is given by \eqref{eq:Green}. As the integral is absolutely continuous for all $x\in [-1,1]$ and $g$ is continuous and non-vanishing for $x\not =0$, then $u$ is a function absolutely continuous on any closed sub-interval of $[-1,0)\cup (0,1]$. But, since \eqref{eq:contat0} holds true, $u$ is also continuous at $x=0$ with $u(0)=k$. That is, $u\in \mathrm{C}([-1,1])$.

Having shown continuity, the identity  \eqref{eq:orthogonality} implies that $u(-1)=u(1)$. Moreover, the construction of the integral representation \eqref{eq:varipar}, of which \eqref{eq:Green} is a particular case, was such that \eqref{eq:bvp} holds true in the distributional sense. But $(fu'+u)'=v\in \operatorname{L}^2(-1,1)$, so indeed $fu'+u\in \operatorname{AC}(-1,1)$. This yields $u\in \operatorname{Dom}(L)$ and completes the proof of Lemma \ref{lem:Green}.
\end{proof}

In this lemma, the conditions \eqref{eq:orthogonality} and \eqref{eq:Green} are compatible with those given in \cite[Proposition~2.2]{ChugKarabas} or the more general \cite[Lemma~4]{BMR2012}. Indeed, whenever $f(\pm 1)=0$, which is not covered in the present paper, periodicity of the functions in the domain would require\footnote{Here and everywhere below, $\langle v\rangle=\frac12 \int_{-1}^1v(x)\,\mathrm{d}x$.}  $\langle v\rangle=0$ instead of \eqref{eq:orthogonality} due to the fact that $g(x)$ would also vanish at $x=\pm 1$.  Note that we only require $u(-1)=u(1)$ in the domain, irrrespectively of whether $f(\pm1)$ vanishes, since the endpoints $\pm 1$ are of limit-circle type for $f(\pm 1)=0$.

We now show that $\operatorname{Dom}(L)$ is indeed a domain of closure for $L$ and that the resolvent of $L$ is compact. 

\begin{proof}[Proof of Lemma~\ref{Lemma1}] According to Lemma~\ref{lem:Green}, we know that
\[
    \operatorname{Ker}(L)=\operatorname{Span}\{1\}\subset \operatorname{L}^2(-1,1).
\]
We show that the reduced operator
\[
    \tilde{L}: \operatorname{Dom}(\tilde{L})=\operatorname{Dom}(L)\cap \{1\}^\perp \longrightarrow \operatorname{Ran}(L)
\]
is invertible and its inverse is compact.

{}Firstly, note that $v\in \operatorname{Ran}(L)$ if and only if \eqref{eq:orthogonality} holds true. Then 
\[
   \operatorname{Ran}(L)=\{g^{\frac{1}{\varepsilon}}(1)-g^{\frac{1}{\varepsilon}}(x)\}^{\perp}.
\]
Therefore, $\operatorname{Ran}(L)$ is a closed subspace of $\mathrm{L}^2(-1,1)$ with codimension 1. Let $S:\operatorname{Ran}(L)\longrightarrow \operatorname{Dom}(L)$ be given by
\[
     Sv(x)=\int_{-1}^1 H(x,z) v(z)\mathrm{d}z
\]
where 
\begin{equation} \label{kernelH}
     H(x,z)=\operatorname{sgn}(x)\left( 1-\frac{g^{\frac{1}{\varepsilon}}(z)}{g^{\frac{1}{\varepsilon}}(x)}\right) \mathds{1}_{(0,|x|]}(\operatorname{sgn}(x) z).
\end{equation}
That is, $Sv(x)=u(x)$ in formula \eqref{eq:Green} with the constant $k=0$.
By virtue of Lemma~\ref{lem:2}, we know that
\[
     \sup_{-1\leq x,z\leq 1} |H(x,z)| <\infty. 
\]
Thus, $S$ is a compact operator. 

Let 
\[
     \tilde{S}v(x)=Sv(x)-\langle Sv\rangle.
\]
Then, $\tilde{S}:\mathrm{Ran}(L)\longrightarrow \operatorname{Dom}(\tilde{L})$. Indeed, by virtue of \eqref{eq:Green}, for all $v$ satisfying \eqref{eq:orthogonality} we have $\tilde{S}v=u\in \operatorname{Dom}(L)$ and $\langle \tilde{S}v\rangle=0$.  Moreover, $\tilde{S}$ is the inverse of $\tilde{L}$. Indeed, according to Lemma~\ref{lem:Green}, $\tilde{L}\tilde{S}v=L\tilde{S}v=v$ for all $v$ satisfying \eqref{eq:orthogonality} and $\tilde{S}\tilde{L}=\tilde{S}(Lu)=u$ for all $u\in\operatorname{Dom}(\tilde{L})\subset \operatorname{Dom}(L)$. This confirms that $\tilde{L}$ is invertible. Finally, as $\tilde{S}$ is a rank one perturbation of $S$, then it is also a compact operator.  \end{proof}

\section{Proof of theorems~\ref{lem:specgen} and \ref{th:thepseudospectrum} for leading order \label{section:4}}

In this section we describe the spectrum and pseudospectrum of $L$ in the specific case $r(x)=0$; that is, $f(x)=2\varepsilon x$. We begin by finding a regular-at-the-origin solution to \eqref{eq:1}, denoted $\Phi(x;E)$ or just $\Phi(x)$,
such that $\Phi(0)=1$. 

Since $r(x)=0$ yields $g(x)=|x|^{\frac12}$ and $h(y)=2y$ in \eqref{superBessel}, the latter reduces to Bessel's equation
\[
     \varepsilon (yF'(y))'+ F'(y)= 2E yF(y).
\]
Let 
\begin{equation} \label{eq:change}
      G(y)=y^{\beta}F(y) \qquad\text{for} \qquad  \beta=\frac{1}{2\varepsilon}+\frac12.
\end{equation}
Since
\begin{align*}
G'(y)&=\beta y^{\beta-1}F(y)+y^{\beta}F'(y) \qquad \text{and} \\
G''(y)&=y^{\beta-1}\Big((yF'(y))'+(2\beta-1)F'(y)+\frac{\beta(\beta-1)}{y}F(y)\Big),
\end{align*}
it follows that
\begin{equation} \label{superBessel0}
G''(y)+\left(\lambda^2-\frac{m^2-\frac14}{y^2}\right)G(y)=0 
\end{equation}
where, from now on, we set the parameters
\begin{equation} \label{eq:ren}
m=\frac{1}{2\varepsilon} \qquad \text{and} \qquad \lambda^2=-4mE.
\end{equation} 

An unscaled solution to \eqref{superBessel0} is \cite[10.13.1]{NIST}
\[
      \widetilde{G}(y)=y^{\frac12}J_m(\lambda y),
\]
giving $\widetilde{F}(y)=y^{-m}J_m(\lambda y)$.
Since near $y=0$
\[
     \widetilde{F}(y)=\frac{\lambda^m J_m(\lambda y)}{(\lambda y)^m}\sim
\frac{\lambda^m}{2^m \Gamma(m+1)}=\widetilde{F}(0),
\]
we obtain that
\[
   F(y) = \frac{2^m \Gamma(m+1)}{\lambda^{m}} y^{-m}J_m(\lambda y)
\]
is such that $F(0)=1$. Since $\Phi$ should be analytic at $x=0$,
then irrespective of whether $x>0$ or $x<0$, 
\begin{equation} \label{linearPhi}
     \Phi(x)=\frac{2^{m}\Gamma(m+1)}{\lambda^{m}}x^{-\frac{m}{2}}J_{m}(\lambda x^{\frac12}),  \qquad \qquad x\in[-1,1],
\end{equation}
is a solution to \eqref{eq:1} for $r(x)=0$ such that $\Phi(0)=1$. Expanding the Bessel function in a power series gives
\begin{equation}  \label{TaylorlinearPhi}
     \Phi(x;E)=\Phi(x)=\sum_{k=0}^\infty (-1)^k \frac{\Gamma(m+1)}{k!\Gamma(m+k+1)}\left(\frac{\lambda}{2}\right)^{2k} x^k
\end{equation}
which converges for all $x\in[-1,1]$ (in fact, for all $x\in \mathbb C$). This is the regular solution required.

\begin{figure}[t]
\includegraphics[width=60mm]{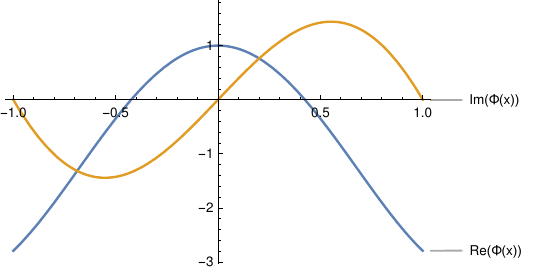} \hspace{.5cm}
\includegraphics[width=35mm]{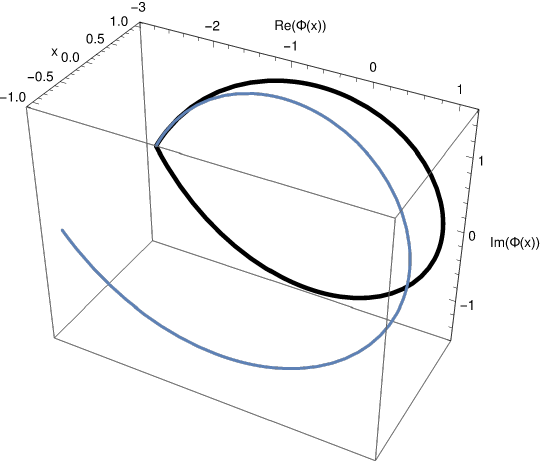}
\caption{For $f(x)=\frac{x}{2}$, the graphs show the eigenfunction $\Phi(x)$ associated with the first non-zero eigenvalue $E\approx 47.8853i$.}
\end{figure}

The proof of the next particular case of Theorem~\ref{lem:specgen} follows the same argument as the proof of \cite[Theorem~2.1]{BLM2010}, but in this case $f(\pm1)\not=0$. 

\begin{lemma} \label{lem:speclin}
Let $f(x)=2\varepsilon x$ for $0<\varepsilon<1$. Then, the spectrum of $L$ is purely imaginary. 
\end{lemma}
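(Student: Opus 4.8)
The plan is to reduce the eigenvalue problem to a single scalar transcendental equation in $E$ and then to locate its roots. First I would argue that every nonzero eigenfunction is a scalar multiple of the regular solution $\Phi(\cdot;E)$ from \eqref{linearPhi}. On each of $(0,1)$ and $(-1,0)$ the equation $l\phi=E\phi$ has a one-dimensional space of solutions that are square-integrable near $0$: the linearly independent solution grows like $|x|^{-m}$ as $x\to0$ (it is governed by $Y_m$), and since $2m=\frac1\varepsilon>1$ it is not in $\operatorname{L}^2$ near the origin. Continuity at the origin together with $\Phi(0)=1$ then forces the same multiple on both sides, so the eigenfunction is $a\,\Phi(\cdot;E)$ on all of $[-1,1]$. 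Since $\Phi(\cdot;E)$ is entire in $x$, the conditions $u\in\operatorname{C}([-1,1])$ and $fu'+u\in\operatorname{AC}(-1,1)$ hold automatically, and the only surviving constraint from $\operatorname{Dom}(L)$ is the periodicity $u(-1)=u(1)$. Hence, for $E\neq0$,
\[
E\in\operatorname{Spec}(L)\quad\Longleftrightarrow\quad \Phi(1;E)=\Phi(-1;E).
\]

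Next I would exploit the parity symmetry at the level of this solution. Because $\mathcal{P}\Phi(\cdot;E)$ solves $l\psi=-E\psi$ and equals $1$ at the origin, it coincides with the regular solution for $-E$, i.e.\ $\Phi(-x;E)=\Phi(x;-E)$. The eigenvalue condition therefore reads $\Phi(1;E)=\Phi(1;-E)$, i.e.\ the odd-in-$E$ part of $E\mapsto\Phi(1;E)$ must vanish. Evaluating the series \eqref{TaylorlinearPhi} at $x=1$ and using $(\lambda/2)^2=-mE$ from \eqref{eq:ren} gives $\Phi(1;E)=\Gamma(m+1)\sum_{k\ge0}\frac{(mE)^{k}}{k!\,\Gamma(m+k+1)}$, whose odd part is a nonzero constant times $mE\,\Psi(m^2E^2)$, where
\[
\Psi(U)=\sum_{j\ge0}\frac{U^{j}}{(2j+1)!\,\Gamma(m+2j+2)}.
\]
Thus the nonzero eigenvalues are exactly the solutions of $\Psi(m^2E^2)=0$; equivalently, writing $\lambda^2=-4mE$ and using $J_m(i\lambda)=i^mI_m(\lambda)$, they solve the Bessel cross-product equation $J_m(\lambda)=I_m(\lambda)$ with $U=\lambda^4/16=m^2E^2$. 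Since $\Psi$ has strictly positive Taylor coefficients and $\Psi(0)\neq0$, it has no zero in $[0,\infty)$; this already excludes nonzero real eigenvalues and is the easy half of the statement.

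It remains to show that every zero $U$ of $\Psi$ is real and negative, for then $E^2=U/m^2<0$ and $E$ is purely imaginary. This is the main obstacle. The two elementary symmetries $\mathcal P$ and $\mathcal T$ only force the spectrum to be symmetric about both axes, and a direct quadratic-form computation of $\operatorname{Re}\langle L\phi,\phi\rangle$ leaves an uncontrolled boundary contribution at $x=\pm1$ (the domain does not constrain $\phi'(\pm1)$), so the reality must be read off from the fine structure of $\Psi$. I would establish it by showing that $\Psi$ belongs to the Laguerre--Pólya class, equivalently that all zeros of $J_m-I_m$ lie on the rays $\arg\lambda\in\{\pm\frac{\pi}{4},\pm\frac{3\pi}{4}\}$, on which $\lambda^2$ is purely imaginary and hence $E=-\lambda^2/(4m)$ is purely imaginary (so that $U=\lambda^4/16$ is real and negative). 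This is the real-rootedness phenomenon for Bessel cross-products familiar from the vibrating-plate/clamped-beam problem, and it is the explicitly solvable instance of the argument of \cite[Theorem~2.1]{BLM2010}. Together with the trivial eigenvalue $E=0$ (the constant function), this yields the claim.
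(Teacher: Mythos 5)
Your reduction is correct and, up to the crux, somewhat cleaner than the paper's: the identification of eigenfunctions with multiples of $\Phi(\cdot;E)$, the use of $\Phi(-x;E)=\Phi(x;-E)$ to turn the periodicity condition into the vanishing of the odd-in-$E$ part of $\Phi(1;E)$, and the computation of that odd part as $\Gamma(m+1)\,mE\,\Psi(m^2E^2)$ with $\Psi(U)=\sum_{j\ge 0}\frac{U^j}{(2j+1)!\,\Gamma(m+2j+2)}$ all check out. The positivity of the coefficients of $\Psi$ does rule out nonzero real eigenvalues, as you say.

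However, there is a genuine gap at exactly the point you yourself flag as ``the main obstacle'': you never prove that every zero of $\Psi$ is real (equivalently, that every zero of $J_m(\lambda)-I_m(\lambda)$ lies on the rays $\arg\lambda\in\{\pm\frac{\pi}{4},\pm\frac{3\pi}{4}\}$). Asserting that $\Psi$ ``belongs to the Laguerre--P\'olya class'' and that this is ``the real-rootedness phenomenon for Bessel cross-products familiar from the vibrating-plate problem'' is not a proof: positivity of Taylor coefficients does not give real-rootedness, the clamped-plate frequency equation is a different cross-product (of the form $J_mI_m'-I_mJ_m'=0$, whose real-rootedness comes from self-adjointness of the biharmonic operator), and the cited Theorem~2.1 of \cite{BLM2010} treats the case $f(\pm1)=0$, which the paper explicitly notes does not apply here. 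This missing step is the entire content of the lemma. The paper closes it by a Phragm\'en--Lindel\"of argument: the reflection formulas $J_m(\overline z)=\overline{J_m(z)}$ and $J_m(z\mathrm{e}^{i\pi})=\mathrm{e}^{i\pi m}J_m(z)$ show that the meromorphic function $\lambda\mapsto \mathrm{e}^{\frac{im\pi}{2}}J_m(i\lambda)/J_m(\lambda)$ has modulus exactly $1$ on the four diagonal rays; the integral representation \eqref{lem:Besselorder1} shows $J_m$ has growth order $1$; and since the poles lie on the real axis and the zeros on the imaginary axis, the Phragm\'en--Lindel\"of principle forces the modulus to be strictly less than $1$ in one pair of open sectors between the diagonals and strictly greater than $1$ in the other, so equality (and hence an eigenvalue) can occur only on the diagonals, i.e.\ for purely imaginary $E$. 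You would need to supply this argument (or the alternative route the paper uses for general $f$ in Theorem~\ref{lem:specgen}: realise the zeros of $E\mapsto\Phi(1;E)$ as eigenvalues of a self-adjoint singular Sturm--Liouville problem, and then use the Hadamard factorisation of this order-$\frac12$ entire function to compare $|\varphi(i\lambda)|$ with $|\varphi(\lambda)|$) to complete the proof.
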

\begin{proof}
The complex number $E$ is an eigenvalue of $L$ if and only if $\Phi(-1;E)=\Phi(1;E)$. This is equivalent to
\[
     \mathrm{e}^{\frac{im\pi}{2}} J_{m}(i\lambda)=J_m(\lambda).
\]
Hence, a necessary condition for $E$ to be an eigenvalue is that 
\begin{equation} \label{eq:condeva}
    \left|\frac{J_{m}(i\lambda)}{J_{m}(\lambda)}\right|=1.
\end{equation}
Now, we know that 
$
J_m(\overline{z})=\overline{J_m(z)}$ and $J_m(z\mathrm{e}^{i\pi})=\mathrm{e}^{i\pi m}J_m(z)
$ for all $z\in \mathbb{C}$. Then, \eqref{eq:condeva} holds true for all $\lambda$ such that  $\arg(\lambda)\in\{\pm \frac{\pi}{4},\,\pm \frac{3\pi}{4}\}$. The proof of the lemma will follow from the fact that the latter is also necessary for the identity \eqref{eq:condeva}.

Indeed, according to \cite[10.9.4]{NIST},
\[ J_m(z) = \frac{2\left(\frac{1}{2}z\right)^m}{\sqrt{\pi}\Gamma\left(m+\frac{1}{2}\right)}
\int_0^1 (1-t^2)^{m-\frac12}\cos(zt)dt. \]
Recall that $m>\frac{1}{2}$, $|\cos(zt)|\leq \exp(|z|)$ for $0\leq t \leq 1$ and $\int_0^1(1-t^2)^{m-\frac12}dt \leq 1$. Then
\begin{equation} \label{lem:Besselorder1}
 |J_m(z)| \leq \frac{2^{1-m}|z|^m \exp(|z|)}{\sqrt{\pi}\Gamma\left(m+\frac{1}{2}\right)}. 
 \end{equation}
 Hence, the analytic function $J_m(z)$ has growth order $1$. The function
\[
      \lambda \longmapsto \frac{\mathrm{e}^{\frac{im\pi}{2}} J_{m}(i\lambda)}{J_m(\lambda)}
\]
is therefore a meromorphic function with poles on the real axis (the Bessel zeros), zeros on the imaginary axis and it has growth order less than or equal to 1 in suitable sectors of the plane. By the Phragm{\'e}n-Lindel{\"o}f Principle and the fact that (\ref{eq:condeva}) holds for
all $\lambda$ with  $\arg(\lambda)\in\{\pm \frac{\pi}{4},\,\pm \frac{3\pi}{4}\}$, 
\[
     \left|\frac{J_{m}(i\lambda)}{J_{m}(\lambda)}\right|<1 \qquad \text{for}\qquad \frac{\pi}{4}<\arg(\pm\lambda)<\frac{3\pi}{4}.
\]
Moreover, replacing $\lambda$ by $i\lambda$, necessarily the opposite inequality
holds,
    \[
     \left|\frac{J_{m}(i\lambda)}{J_{m}(\lambda)}\right|>1 \qquad \text{for}\qquad -
\frac{\pi}{4}<\arg(\pm\lambda)< \frac{\pi}{4}.\]
Hence, \eqref{eq:condeva} is only possible when $\arg(\lambda)\in\{\pm \frac{\pi}{4},\,\pm \frac{3\pi}{4}\}$. This completes the proof of the lemma.  
\end{proof}

We now develop asymptotic resolvent norm estimates for $L$. These describe the  pseudospectrum of $L$ away from the spectrum. Since $L$ and the symmetries $\mathcal{P}$ and $\mathcal{T}$ commute,  without loss of generality we can assume that $E$ is in the open first quadrant. Recalling \eqref{eq:ren}, we therefore set  
\begin{equation} \label{lambdaandE}
        \lambda=|\lambda|\mathrm{e}^{i\left(\frac{\pi}{2}+\theta\right)} \qquad \text{for} \quad \theta\in\Big(0,\frac{\pi}{4}\Big). 
\end{equation}
Then,
\[
   \frac{\Phi(-1)}{\Phi(1)}=\mathrm{e}^{-\frac{i\pi m}{2}}\frac{J_m(-|\lambda|\mathrm{e}^{i\theta})}{J_m(i|\lambda|\mathrm{e}^{i\theta})}.
\]
Now, we recall \cite[10.7.8]{NIST} that
\begin{equation} \label{preasympBessel}
     J_{m}(z)= \frac{\mathrm{e}^{i(z-\frac{\pi}{4}-m\frac{\pi}{2})}+\mathrm{e}^{-i(z-\frac{\pi}{4}-m\frac{\pi}{2})}}{\sqrt{2\pi}z^{\frac12}}+\frac{\sqrt{2}\mathrm{e}^{|\operatorname{Im}z|}}{\sqrt{\pi}z^{\frac12}}o(1)
\end{equation}
as $z\to \infty$ in sectors $|\operatorname{arg}(z)|\leq \pi-\delta$, where the limit is uniform for any fixed $0<\delta<\pi$.
Thus, for all $\tau>0$ and $\theta\in \big(0,\frac{\pi}{4}\big)$ fixed, 
\begin{equation} \label{asympBessel}
      \left|J_m(-|\lambda|\tau \mathrm{e}^{i\theta})\right|\sim \frac{\mathrm{e}^{|\lambda|\tau \sin \theta}}{\sqrt{2\pi}|\lambda|^{\frac12}\tau^{\frac12}} \qquad \text{and} \qquad \left|J_m(i|\lambda|\tau \mathrm{e}^{i\theta})\right|\sim \frac{\mathrm{e}^{|\lambda|\tau \cos \theta}}{\sqrt{2\pi}|\lambda|^{\frac12}\tau^{\frac12}}
\end{equation}
as $|\lambda|\to\infty$. Moreover, these limits are uniform in $\theta$; for all $\theta\in\big[\delta,\frac{\pi}{4}\big]$ on the left hand side and for all $\theta\in\big[0,\frac{\pi}{4}-\delta\big]$ on the right hand side. Hence, for $\theta\in (0,\frac{\pi}{4})$, uniformly in $\big[\delta,\frac{\pi}{4}-\delta\big]$, we know that
\begin{equation} \label{asympPhi-1Phi1}
      \left|\frac{\Phi(-1)}{\Phi(1)}\right|\sim \mathrm{e}^{|\lambda|(\sin\theta-\cos \theta)} 
\end{equation}
as $|\lambda|\to\infty$. Note that, this estimate is always decaying in $|\lambda|$. We also remark at this stage that we know that $|\frac{\Phi(-1)}{\Phi(1)}|<1$ for all $|\lambda|>0$ and $\theta$ in the sector considered here, from the proof of Lemma~\ref{lem:speclin}.

\begin{figure}[t]
(a) \includegraphics[width=.45\textwidth]{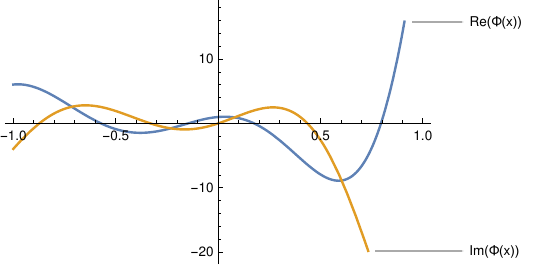} 
\includegraphics[width=.45\textwidth]{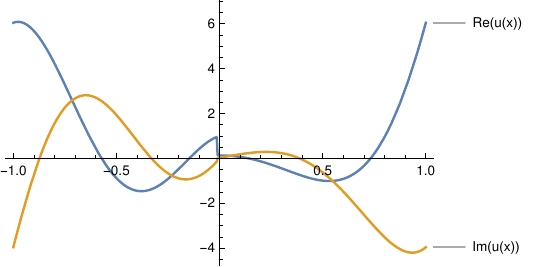}(c)

(b) \includegraphics[width=.45\textwidth]{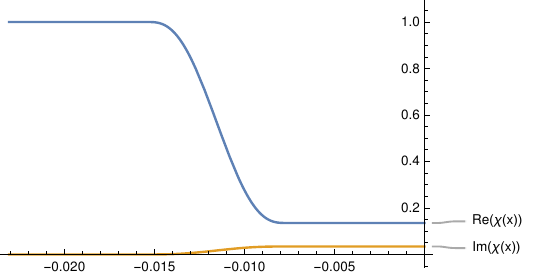} 
\includegraphics[width=.45\textwidth]{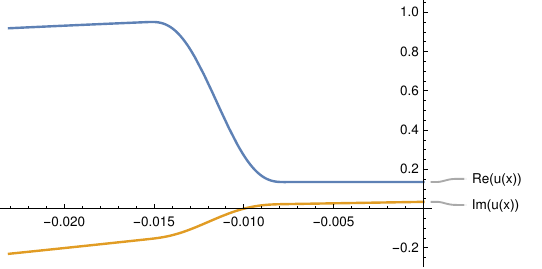}(d)
\caption{Illustration of the construction of the pseudo-mode for $f(x)=\frac{x}{2}$ and $\lambda=-7+9i$. (a)~Solution $\Phi(x)$, (b)~periodiser $\chi(x)$ in the vicinity of $[-\frac{2}{|\lambda|^2},-\frac{1}{|\lambda|^2}]$, pseudo-mode $\quasi(x)$ (c)~in $[-1,1]$ and (d)~in the vicinity of $[-\frac{2}{|\lambda|^2},-\frac{1}{|\lambda|^2}]$.}\label{fig2}
\end{figure}

The following proposition gives Theorem~\ref{th:thepseudospectrum} for linear $f(x)$. The constant $R$ below is independent of $\lambda$, $m$ and $\delta$.

\begin{proposition}\label{claim:2}
Let $f(x)=\frac{x}{m}$ where $m>\frac12$. Let $\lambda\in \mathbb{C}$, lie in the sector prescribed by \eqref{lambdaandE} and set $E\equiv E_\lambda=-\frac{\lambda^2}{4m}$. Let $0<\delta<\frac{\pi}{8}$ be fixed. There exists a constant $R>0$ ensuring the following.  For all $|\lambda|\geq R$ and $\theta\in \big[\delta,\frac{\pi}{4}-\delta\big]$, we can find a pseudo-mode $\quasi\equiv \quasi_{\lambda}\in \operatorname{Dom}(L)$ such that 
\begin{equation}  \label{pseudospecasymp}
    \frac{\|(L-E_\lambda)\quasi_{\lambda}\|}{\|\quasi_{\lambda}\|}\leq
\frac{2\sqrt{2\pi}\mathrm{e}^{\frac12}\big(\frac{32}{m}+4\big)}{2^m\Gamma(m+1)} |\lambda|^{m+\frac32} \mathrm{e}^{-\frac{|\lambda|}{\sqrt{2}}\sin \theta} .
\end{equation} 
\end{proposition}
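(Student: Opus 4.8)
The plan is to take the exact regular solution from \eqref{linearPhi} and render it periodic by rescaling it across a short interval at the origin. Write $\Phi=\Phi(\cdot;E_\lambda)$, so that $(L-E_\lambda)\Phi=0$ on $(-1,0)\cup(0,1)$ and $\Phi(0)=1$, and recall from \eqref{asympPhi-1Phi1} that the failure of periodicity is measured by $M:=|\Phi(1)/\Phi(-1)|\sim\mathrm{e}^{|\lambda|(\cos\theta-\sin\theta)}$, which is exponentially large for $\theta\in(0,\tfrac\pi4)$. Choose a periodiser $\chi\in\operatorname{C}^2([-1,1])$ with $\chi\equiv1$ on $[-|\lambda|^{-2},1]$ and $\chi\equiv\Phi(1)/\Phi(-1)$ on $[-1,-2|\lambda|^{-2}]$, interpolating across $I:=[-2|\lambda|^{-2},-|\lambda|^{-2}]$ so that $\chi^{(j)}=O(M|\lambda|^{2j})$ for $j\le2$. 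Put $\quasi_\lambda=\chi\Phi$. Because $\chi\equiv1$ near the origin, there $\quasi_\lambda=\Phi$, so $\quasi_\lambda$ is continuous at $0$ with $\quasi_\lambda(0)=1$; moreover $\quasi_\lambda(1)=\Phi(1)$ and $\quasi_\lambda(-1)=\tfrac{\Phi(1)}{\Phi(-1)}\Phi(-1)=\Phi(1)$, so $\quasi_\lambda(-1)=\quasi_\lambda(1)$. Since $f\quasi_\lambda'+\quasi_\lambda\in\operatorname{C}^1$, Lemma~\ref{lem:Green} gives $\quasi_\lambda\in\operatorname{Dom}(L)$, and the jump of $\chi'''$ at $\partial I$ is the third-derivative jump near $x=0$ anticipated in the introduction.

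I would then estimate the residual. As $(L-E_\lambda)\Phi=0$, the product rule collapses to
\[(L-E_\lambda)\quasi_\lambda=(f\chi'\Phi)'+\chi'(f\Phi'+\Phi)=f'\chi'\Phi+f\chi''\Phi+2f\chi'\Phi'+\chi'\Phi,\]
which is supported on $I$. There $x\sim-|\lambda|^{-2}$, so the Bessel argument $\lambda x^{1/2}$ has modulus in $[1,\sqrt2]$; hence $f=x/m=O(|\lambda|^{-2})$, while \eqref{TaylorlinearPhi} yields $\Phi=O(1)$ and $\Phi'=O(|\lambda|^2)$ on $I$. Combining these with $\chi'=O(M|\lambda|^2)$ and $\chi''=O(M|\lambda|^4)$, every term is $O(M|\lambda|^2)$, and integration over $|I|\sim|\lambda|^{-2}$ gives $\|(L-E_\lambda)\quasi_\lambda\|\le c(m)M|\lambda|$ with $c(m)$ of the form $\mathrm{const}+\mathrm{const}/m$, matching the $m$-dependence of the prefactor in \eqref{pseudospecasymp}.

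For the denominator I would discard all of $[-1,1]$ except a neighbourhood of $x=1$, where $\quasi_\lambda=\Phi$, and apply the lower estimate in \eqref{asympBessel} to obtain $\|\quasi_\lambda\|\ge\|\Phi\|_{\operatorname{L}^2(0,1)}\ge c|\Phi(1)|\,|\lambda|^{-1/2}$. Dividing and using $M=|\Phi(1)|/|\Phi(-1)|$,
\[\frac{\|(L-E_\lambda)\quasi_\lambda\|}{\|\quasi_\lambda\|}\le C(m)\,\frac{M|\lambda|^{3/2}}{|\Phi(1)|}=C(m)\,\frac{|\lambda|^{3/2}}{|\Phi(-1)|},\]
so the whole exponential decay is carried by $1/|\Phi(-1)|$. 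Feeding in $|\Phi(-1)|\sim\frac{2^m\Gamma(m+1)}{\sqrt{2\pi}}|\lambda|^{-m-1/2}\mathrm{e}^{|\lambda|\sin\theta}$ from \eqref{asympBessel} produces the factor $\frac{1}{2^m\Gamma(m+1)}|\lambda|^{m+2}\mathrm{e}^{-|\lambda|\sin\theta}$, which is the content of \eqref{pseudospecasymp} once the rate is relaxed from $\sin\theta$ to $\tfrac{1}{\sqrt2}\sin\theta$.

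The main obstacle is quantitative uniformity. To land the exact constant, the power $|\lambda|^{m+\frac32}$ and—above all—a threshold $R$ independent of $\theta$, $m$ and $\delta$, one cannot use the bare $\sim$-asymptotics \eqref{asympBessel}, whose $o(1)$ errors only become effective beyond a $\theta$- and $m$-dependent scale; instead one must use explicit, non-asymptotic Bessel bounds (of the type \eqref{lem:Besselorder1} together with a matching lower bound) valid for all $|\lambda|\ge R$, applied at $z=\lambda$, at $z=\lambda(-1)^{1/2}$ after reducing its argument into $|\arg z|\le\pi-\delta$, and on the order-one window $I$. The slack $(1-\tfrac1{\sqrt2})|\lambda|\sin\theta$ gained by the relaxed exponent is precisely what absorbs these error terms and the surplus power of $|\lambda|$ uniformly, so that $R$ can be made universal. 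Staying away from $\theta=\tfrac\pi4$ keeps $M$ genuinely large, so the rescaling is well defined and the construction is stable near—but not on—the (imaginary) spectrum, while staying away from $\theta=0$ keeps the decay nontrivial.
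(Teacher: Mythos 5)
Your construction is the paper's own pseudo-mode up to an overall scalar: the paper freezes $\chi\equiv 1$ on the left and $\chi\equiv\Phi(-1)/\Phi(1)$ on the right, whereas you multiply through by $\Phi(1)/\Phi(-1)$, and since the quotient $\|(L-E_\lambda)\quasi\|/\|\quasi\|$ is scale-invariant the residual decomposition $\Phi\,L[\chi]+2f\chi'\Phi'$, the Taylor bounds for $\Phi,\Phi'$ on $[-2|\lambda|^{-2},-|\lambda|^{-2}]$, and the Bessel asymptotics \eqref{asympBessel} all enter exactly as in the paper's Steps 1--3. The only substantive deviation is that you collect the denominator's mass near $x=1$ rather than on $[-1,-\tfrac12]$, which in fact yields the slightly sharper rate $\mathrm{e}^{-|\lambda|\sin\theta}$ before you relax to the stated $\mathrm{e}^{-|\lambda|\sin\theta/\sqrt2}$; this is correct and the argument goes through.
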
 
\begin{proof} The proof is split into three main steps.  

\underline{Step 1}: construction of $\quasi$. We write $\quasi(x)$ as the product of a periodiser $\chi(x)$ and the bounded solution $\Phi(x)$ as follows, see Figure~\ref{fig2}. 
Let 
\[
 p(x)=\frac{\int_x^1 t^2(1-t)^2\,dt}{\int_0^1 t^2(1-t)^2\,dt} = 1-10x^3+15 x^4-6x^5.
\] Then, $p(0)=1$, $p'(0)=p''(0)=0$, $p(1)=p'(1)=p''(1)=0$ and $p(x)$ is decreasing on $[0,1]$. 
Moreover, 
\[
    \sup_{x\in[0,1]}|p'(x)|=\frac{15}{8}<2 \qquad \text{and} \qquad
\sup_{x\in[0,1]}|p''(x)|<6
\] 
where the latter supremum is achieved at $x=\frac{3\pm \sqrt{3}}{6}$. 
Let
\begin{equation}\label{eq:chidef}
    \chi(x)=\begin{cases} 1, & -1 \leq x \leq -\frac{2}{|\lambda|^2}, \\ \\
     \left[\left(1-\frac{\Phi(-1)}{\Phi(1)}\right) p\big(|\lambda|^2 x+2\big)+\frac{\Phi(-1)}{\Phi(1)}\right], & -\frac{2}{|\lambda|^2}\leq x \leq -\frac{1}{|\lambda|^2}, \\ \\
\frac{\Phi(-1)}{\Phi(1)}, & -\frac{1}{|\lambda|^2} \leq x \leq 1.   \end{cases}
\end{equation}
Then $\chi\in \mathrm{C}^2([-1,1])$. Moreover, from the upper bounds on the derivatives of $p(x)$ above, we get that for all $x\in[-1,1]$, 
\begin{equation} \label{star}
     |\chi'(x)|\leq \frac{15}{4}|\lambda|^2
\qquad \text{and}
\qquad
    |\chi''(x)|\leq 12|\lambda|^4.
\end{equation}
Now, set $\quasi(x)=\chi(x) \Phi(x)$. Note that by construction 
$\quasi\in \mathrm{C}^2([-1,1])$ and $\quasi(-1)=\quasi(1)$ so $\quasi\in \operatorname{Dom}(L)$.

\underline{Step 2}: lower bound on the denominator of \eqref{pseudospecasymp}. We now show that there exists $R>0$ such that
\begin{equation} \label{lowerdenominator}
    \|\quasi\|\geq \frac{2^{m}\Gamma(m+1)}{2\sqrt{2\pi}|\lambda|^{m+\frac12}} \mathrm{e}^{\frac{|\lambda|\sin \theta}{\sqrt{2}}}  
\end{equation}
for all  $|\lambda|\geq R$. Indeed, let $x\in[-1,-\frac12]$. From the expression for $\Phi(x)$ in \eqref{linearPhi} and the restriction on the argument of $\lambda$ from \eqref{lambdaandE}, it follows that
\[
     |\Phi(x)|\geq \frac{2^m \Gamma(m+1)}{|\lambda|^{m}} \left| J_{m}\left( -|\lambda||x|^{\frac12} \mathrm{e}^{i\theta}   \right)   \right|.
\]
According to the first identity in \eqref{asympBessel}, the right hand side of this is
\[
    \sim \frac{2^{m} \Gamma(m+1)}{\sqrt{2\pi}|\lambda|^{m+\frac12}|x|^{\frac14}} \mathrm{e}^{|\lambda||x|^\frac12 \sin \theta}
\]
for $|\lambda|\to\infty$ and fixed $x\in[-1,-\frac12]$. Then, for all $c>1$, there is a sufficiently large $R>0$ such that
\[
     |\Phi(x)|\geq c \frac{2^{m} \Gamma(m+1)}{\sqrt{2\pi}|\lambda|^{m+\frac12}} \mathrm{e}^{\frac{|\lambda| \sin \theta}{\sqrt{2}}}
\]
for all  $x\in\left[-1,-\frac12\right]$ and $|\lambda|> R$. Integrating in mean square and taking square roots, gives \eqref{lowerdenominator}.

\underline{Step 3}: upper bound on the numerator of \eqref{pseudospecasymp}. We now show that 
\begin{equation}   \label{uppernumerator}
    \|(L-E)\quasi\|\leq |\lambda|\mathrm{e}^{\frac12} \left(\frac{32}{m}+4\right)
\end{equation}
for all $|\lambda|\geq 2$. For this, we begin by noting that
\[
     (L-E)\quasi=\chi(L-E)\Phi+\Phi[L\chi]+2f\chi'\Phi'=\Phi[L\chi]+2f\chi'\Phi'.
\]
From the Maclaurin expansion \eqref{TaylorlinearPhi} of $\Phi(x)$, it follows that,
\begin{align*}
   |\Phi(x)|& \leq \sum_{r=0}^\infty \frac{2^r}{2^{2r}r!}=\mathrm{e}^{\frac12}
\end{align*}
for all $x\in\left[-\frac{2}{|\lambda|^2},-\frac{1}{|\lambda|^2}\right]$ whenever $|\lambda|\geq 2$. 
Now
\[
   \Phi'(x)=\sum_{r=0}^\infty (-1)^{r+1}\frac{\Gamma(m+1)}{r!\Gamma(m+r+2)}\left(\frac{\lambda^2}{4}\right)^{r+1} x^r.
\]
Then,
\begin{align*}
    |\Phi'(x)|&\leq \sum_{r=0}^\infty \left(\frac{|\lambda|^2}{4}\right)^{r+1}\frac{1}{r!}\left(\frac{2}{|\lambda|^2}\right)^r=\frac{|\lambda|^2}{4}\mathrm{e}^{\frac12}
\end{align*}
for all such $x$ and $\lambda$ too. 
Next, using $2\varepsilon |x| \leq 2 \varepsilon \frac{2}{|\lambda|^2}$, along with \eqref{star},
we obtain 
\[  |L[\chi](x)| = |2\varepsilon x\chi''(x)+(2\varepsilon+1)\chi'(x)|\leq 2\varepsilon \frac{2}{|\lambda|^2}12|\lambda|^4 + (2\varepsilon+1)
\frac{15}{4}|\lambda|^2 \]
\[
 = \left(55\frac{1}{2}\varepsilon+\frac{15}{4}\right)|\lambda|^2 = \left(\frac{27\frac{3}{4}}{m}+\frac{15}{4}\right)|\lambda|^2. \]
Therefore, integrating in the segment where $\chi(x)$ is not constant gives the following:
\begin{equation}\label{eq:neededlater}
\begin{aligned}
\|(L-E)\quasi\|&\leq \|\Phi L[\chi]\|+\|2f\chi' \Phi' \| \\
     &\leq \frac{1}{|\lambda|}\left(\|\Phi L[\chi]\|_{\infty}+\frac{4}{m|\lambda|^2}    \|\chi'\Phi'\|_{\infty}\right) \\
     &\leq \mathrm{e}^{\frac12}\left(\frac{27\frac{3}{4}}{m}+\frac{15}{4}\right)|\lambda| + \frac{\frac{15}{4}|\lambda|}{m}\mathrm{e}^{\frac12} 
      \; \leq \; |\lambda|\mathrm{e}^{\frac12}\left(\frac{32}{m}+4\right),
\end{aligned} \end{equation}
for $|\lambda|\geq 2$. 

Combining steps~2 and 3, the statement in the proposition is confirmed. Note that the claim that $R$ can be chosen uniform in $\theta\in[\delta,\frac{\pi}{4}-\delta]$, follows from the analogous property already confirmed for \eqref{asympBessel}.
\end{proof}

As a consequence of the above, we see that for $E\in\mathbb{C}$ with fixed $\arg(E)\not\in\{\frac{k\pi}{2}\}_{k\in \mathbb{Z}}$, the resolvent norm $\|(L-E)^{-1}\|\to \infty$ in the context of Theorem~\ref{th:thepseudospectrum} with $a=\frac{\varepsilon^{\frac12}}{4}$.

\section{Integral representation of the solution and proof of Theorem~\ref{lem:specgen} \label{section:3}}
For the remainder of this paper we return to general $f$. Applying several changes of variables, whose parameters are summarised in Table~\ref{table1}, we re-write \eqref{superBessel} as a Schr\"odinger equation with a perturbed Calogero potential on a finite segment.
We derive an integral representation of the regular-at-zero solution, by applying the theory of transformators to this Schr\"{o}dinger
equation. We will then see that the growth-order $1$ of $J_m(z)$ (see \eqref{lem:Besselorder1}) carries over to the solutions for general $f$, allowing us to show that the spectrum of $L$ is purely imaginary. The integral representation
of solutions also carries over the finer asymptotic estimates from the case $f(x)=2\varepsilon x$. Therefore, the properties of a pseudo-mode constructed in the same fashion as in Propopsition~\ref{claim:2} are preserved, as we shall see in Section~\ref{section:5}.

For non-linear $f$, the change of variables \eqref{eq:change} leads to
\begin{equation} \label{superBessel3}
G''(y)-\frac{m^2-\frac14}{y^2}G(y)=4mE (1+\rho(y))G(y) 
\end{equation}
where $y\in [0,b_1]$ and $\rho(y)\not\equiv 0$ is given by 
\[\rho(y)=\frac{\rho_3(y)}{y}=\frac{h(y)}{y}-2=O(y^2)\] as $y\to 0$. The function $h$ is given in
 \eqref{superBessel} and satisfies \eqref{asympt_h}, and recall that $b_1=g(1)$.
This intermediate problem can now be re-written in Liouville normal form, as follows.

\begin{lemma} \label{lemma:5}
The change of variables 
\begin{equation} \label{eq:LNF}
     t=\int_0^y (1+\rho(s))^{\frac12}\,\mathrm{d}s \qquad\text{and} \qquad Z(t)=(1+\rho(y))^{\frac14}G(y),
\end{equation}
transforms the equation \eqref{superBessel3} into the equation
 \begin{equation}\label{eq:zLNF3}
 -Z''(t) + q(t) Z(t) + \frac{\ell(\ell+1)}{t^2} Z(t) = -4mE Z(t)
\end{equation}
for $t\in(0,b_2]$ and 
\[
 \ell= \frac{1}{2\varepsilon}-\frac{1}{2} = m - \frac{1}{2} >0, \]
 where $b_2=\int_0^{b_1}(1+\rho(s))^{\frac12} \,\mathrm{d}s$ and
the potential $q:[0,b_2]\longrightarrow \mathbb{R}$ is continuous and bounded.
\end{lemma}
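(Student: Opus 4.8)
The plan is to recognise \eqref{eq:LNF} as the classical Liouville normal-form transformation applied to \eqref{superBessel3}, and then to verify that the transformed potential stays bounded once the explicit singular term $\ell(\ell+1)/t^2$ has been extracted. First I would check that the change of variables is well defined. Writing $w(y)=1+\rho(y)$, the positivity of $f$ on $(0,1]$ together with $\rho(y)=O(y^2)$ (so that $w(y)\to 1$ as $y\to 0$) shows that $w>0$ on $[0,b_1]$; hence $t(y)=\int_0^y w(s)^{\frac12}\,\mathrm{d}s$ is strictly increasing, maps $[0,b_1]$ onto $[0,b_2]$, and is as smooth as $w$ permits. Since $\varepsilon\in(0,1)$ gives $m=\frac1{2\varepsilon}>\frac12$, the exponent $\ell=m-\frac12$ is positive and satisfies $\ell(\ell+1)=(m-\frac12)(m+\frac12)=m^2-\frac14$, which is precisely the coefficient of the Calogero term carried over from \eqref{superBessel3}.

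Next I would substitute $G(y)=w(y)^{-\frac14}Z(t(y))$ and $\mathrm{d}t/\mathrm{d}y=w(y)^{\frac12}$ into \eqref{superBessel3}, written in the form $-G''+\frac{m^2-1/4}{y^2}G=-4mE\,w\,G$ (derivatives in $y$). This is exactly the situation in which the Liouville substitution removes the first-derivative term: the contributions proportional to $\dot Z$ cancel identically, and one is left with
\[
 -Z''(t)+\widetilde q(y)\,Z(t)=-4mE\,Z(t),\qquad \widetilde q=\frac{1}{w}\frac{m^2-\frac14}{y^2}+\frac14\frac{w''}{w^2}-\frac{5}{16}\frac{(w')^2}{w^3},
\]
where the right-hand side is evaluated at $y=y(t)$ and primes on $w$ denote $y$-derivatives. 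Setting $q(t)=\widetilde q(y(t))-\ell(\ell+1)/t^2$ then yields \eqref{eq:zLNF3}; this part is routine bookkeeping.

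The substance of the proof — and the only real obstacle — is to show that $q$ is continuous and bounded on $[0,b_2]$, which amounts to checking that the two inverse-square singularities cancel at the origin. On $(0,b_2]$ continuity is immediate because $w>0$ is $\mathrm{C}^2$ there; the delicate point is the limit as $t\to 0$. From $\rho(y)=O(y^2)$ one has $\frac1w=1+O(y^2)$, so $\frac1w\frac{m^2-1/4}{y^2}=\frac{m^2-1/4}{y^2}+O(1)$; and from $t=\int_0^y(1+O(s^2))\,\mathrm{d}s=y+O(y^3)$ one gets $\frac1{t^2}=\frac1{y^2}+O(1)$, whence $\frac1w\frac{m^2-1/4}{y^2}-\frac{\ell(\ell+1)}{t^2}=O(1)$ as $y\to 0$. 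The remaining terms $\frac14 w''/w^2-\frac5{16}(w')^2/w^3$ are bounded because $w$ is twice continuously differentiable near $0$ with $w'=\rho'=O(y)$ and $w''=\rho''=O(1)$; this $\mathrm{C}^2$ regularity, and indeed the existence of the relevant limits, follows from the fact that $\rho$ is an even, smooth function of $y$ near the origin, inherited from the analyticity of $r$ and $r_2$ and the expansion $g^{-1}(y)=y^2+O(y^4)$. Combining these estimates gives a finite value $q(0)$ and hence continuity on all of $[0,b_2]$; the hard part is thus entirely the singular cancellation near $y=0$, while boundedness on the remainder of the interval and at the regular endpoint $y=b_1$ (where $w$ is smooth and positive) is automatic.
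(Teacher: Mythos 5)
Your proposal is correct and takes essentially the same route as the paper: the Liouville normal-form substitution \eqref{eq:LNF} with the first-derivative terms cancelling, the identification $\ell(\ell+1)=m^2-\tfrac14$, and the verification that the two inverse-square singularities cancel at the origin via $\rho(y)\sim\alpha y^2$, $t=y+\tfrac{\alpha}{6}y^3+\cdots$, so that $\tfrac{1}{y^2}-\tfrac{1}{t^2}=O(1)$ and $\rho(y)/y^2=O(1)$. The only difference is presentational: you write the transformed potential $\widetilde q$ explicitly in terms of $w=1+\rho$ and then subtract $\ell(\ell+1)/t^2$, whereas the paper keeps the $(m^2-\tfrac14)/y^2$ term in place and absorbs the difference of the two inverse-square terms into $q$; the boundedness argument is identical.
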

\begin{proof}
Firstly note that 
\begin{eqnarray}
\hspace{-1cm} Z''(t) - \frac{m^2-\frac{1}{4}}{y^2}Z(t)  &=&  4mE Z(t) + \nonumber \\
& & \left[ \frac{\left(\frac{\rho'(y)}{4}(1+\rho(y))^{-\frac{5}{4}}\right)'}{(1+\rho(y))^{\frac{3}{4}}}-\frac{(m^2-\frac{1}{4})\rho(y)}{y^2(1+\rho(y))}\right] Z(t).  \label{eq:zLNF}
\end{eqnarray}
We wish to replace the coefficient $\displaystyle \frac{m^2-\frac{1}{4}}{y^2}$ by 
$\displaystyle \frac{m^2-\frac{1}{4}}{t^2}$ on the left hand side. Write
\begin{equation*}
\hspace{-2pt} q(t) = \left(m^2-\frac{1}{4}\right)\left(\frac{1}{t^2}-\frac{1}{y^2}\right) + 
\left[ \frac{\left(\frac{\rho'(y)}{4}(1+\rho(y))^{-\frac{5}{4}}\right)'}{(1+\rho(y))^{\frac{3}{4}}}-\frac{(m^2-\frac{1}{4})\rho(y)}{y^2(1+\rho(y))}\right].
\end{equation*} 
Since $f$ is twice continuously differentiable, $\rho''$ and hence $q$ are continuous (and thus bounded)
outside any neighbourhood of the origin.

To show that $q$ is bounded for all $t\in[0,b_2]$, therefore, we need only check that it is bounded at $t=0$. 
For $\rho(y)\sim \alpha y^2$, where $y$ is small,
\begin{equation}
\label{eq:tysmall}
t = y + \frac{\alpha}{6}y^3 + \cdots \qquad \text{and} \qquad y = t - \frac{\alpha}{6}t^3 + \cdots  
\end{equation}
where $t$ is small. Then,
\[
\frac{1}{y^2} = \frac{1}{t^2}\left(1-\frac{\alpha}{6}t^2 + \cdots\right)^{-2} = \frac{1}{t^2}\left(1 + \frac{\alpha}{3}t^2 + O(t^4)\right) = 
 \frac{1}{t^2} + \frac{\alpha}{3} + O(t^2). 
 \]
Hence, $\displaystyle \frac{1}{t^2}-\frac{1}{y^2}$ is $O(1)$ for small $t$. 
Moreover, for small $y$, $\frac{\rho(y)}{y^2} = \alpha=O(1)$. Furthermore, all other
terms in the expression for $q(t)$ are $O(1)$ for small $t$. Thus $q$ is indeed bounded.

Finally, since
\[ m^2-\frac{1}{4} = \left(\frac{1}{2\varepsilon}-\frac{1}{2}\right) \left(\frac{1}{2\varepsilon}+\frac{1}{2}\right)  = \ell(\ell+1),
\]
the expression \eqref{eq:zLNF3} is confirmed.
\end{proof}

\begin{remark} \label{rem:domainZeq}
Denote $Z(t)$ and $\Phi(x)$ by $Z(t;E)$ and $\Phi(x;E)$, to indicate explicitly the dependence on $E$. 
In the proof of Theorem~\ref{lem:specgen} below, we shall study the zeros of the map $E\mapsto \Phi(1;E)$. These will be 
expressed in terms of the eigenvalues of  \eqref{eq:zLNF3} on $(0,b_2]$ with Dirichlet boundary condition $Z(b_2;E)=0$. The 
associated operator is self-adjoint when $m\geq 1$ since the singularity at the origin is then of limit-point type \cite{Coddington}. 
For $\frac12<m<1$ the singularity at the origin is of limit-circle-nonoscillatory type; its unique self-adjoint Friedrichs extension has 
eigenfunctions which are the `principal solutions' at the origin \cite{Rellich} and so coincide with the transformation of $\Phi(x;E)$.
\end{remark}

We now proceed to wrap up all the transformations by expressing $\Phi(x)$ in terms of $Z(t)$.
Since $\Phi(x) = F(g(x))$ and $G(y)=y^\beta F(y)$, we have 
\[ \Phi(x) = g(x)^{-\beta}G(g(x)). \]
Since \eqref{eq:LNF} gives
\[ G(y) = \big(1+\rho(y)\big)^{-\frac14}\,\, Z\left(\int_0^y \sqrt{1+\rho(s)}\right), \]
then, as $\beta = \frac{1}{2}+\frac{1}{2\varepsilon}= \ell+1$,
\begin{equation}\label{eq:atg}
 \Phi(x) = (1+\rho(g(x)))^{-\frac14}g(x)^{-(\ell+1)}\,\, Z\left(\int_0^{g(x)} \sqrt{1+\rho(s)}ds\right). 
 \end{equation}
In order to ensure $\Phi(0)=1$, here and everywhere below we choose a solution $Z(t)$ of  \eqref{eq:zLNF3} 
such that 
\begin{equation} \label{asympZat0} 
Z(t)= t^{\ell+1}+o(t^{\ell+\frac52})
\end{equation}
as $t\to 0$ (a `principal solution'). It is straightforward to see that this solution exists. 

Indicating the dependence on $q(\cdot)$ and $E$ explicitly, we use $Z_q(\cdot;E)$ to denote the solution of \eqref{eq:zLNF3}. In this notation,  
\begin{equation} \label{eq:Z0}
Z_0(t;E)=\frac{2^m\Gamma(m+1)}{\lambda^m}t^{\frac12}J_m(\lambda t) \end{equation} 
 is the solution for $q(t)=0$. Recall that $\lambda^2 = -4mE$.
Let
\[\tau(x) = \int_0^{g(x)}\sqrt{1+\rho(s)}\,\mathrm{d}s,\]
so that $\tau(x)\sim|x|^{\frac12}$ as $x\to 0$. It will be convenient to re-write \eqref{eq:atg} as
\begin{equation} \label{eq:atgprime} \tag{\ref{eq:atg}$'$} 
\Phi(x;E) = \rho_4(g(x))Z_q(\tau(x);E) \qquad \text{for} \qquad \rho_4(y)=(1+\rho(y))^{-\frac14}y^{-(\ell+1)}. \end{equation}
Note that $\rho_4(y)=O(y^{-(\ell+1)})$ as $y\to 0$.

Our final task in this section before proving Theorem~\ref{lem:specgen}, is to write $Z_q(t;E)$ in terms of $Z_0(t;E)$. For this, we shall need the `transformator equation'
\begin{equation}\label{eq:transK}
Z_q(t;E) = Z_0(t;E) + \int_0^t K_q(t,s)Z_0(s;E) ds 
\end{equation}
in which $K_q$ is an $E$-independent kernel, described in \cite{Kravchenko} and dating back to \cite{Volk},
to map the known asymptotic results
on $Z_0$ to corresponding results for $Z_q$. Before doing so, we note the following properties of $K_q$.
\begin{enumerate}
\item[i)] If $q$ is continuous including at $0$,
then a solution $Z_q(t;E)$ of \eqref{eq:zLNF3} is given by (\ref{eq:transK}).
\item[ii)] \label{supnormKq} \[\sup_{0\leq s\leq t\leq b_2} |K_q(t,s)|<\infty.\]
\item[iii)] $K_q(t,s)$ solves \[\Big(\partial_t^2-\partial_s^2 - \frac{\ell(\ell+1)}{t^2} + \frac{\ell(\ell+1)}{s^2} - q(t)\Big)K_q=0\]
in the region $0<s < t$ with boundary conditions $K_q(t,0) = 0$ for $t\geq 0$  and $2 \frac{d}{dt}K_q(t,t) = q(t)$.
\end{enumerate}
In the sequel, we shall only use the properties i) and ii). The property i) follows from Lemma~\ref{lemma:5}.

\begin{proof}[Proof of Theorem~\ref{lem:specgen}]
Firstly, recall that that $E\in \operatorname{Spec}(L)$ if and only if
\[
     \Phi(-1;E)=\Phi(1;E).
\]

Next, we claim that the function $E\mapsto \Phi(1;E)$ is an entire function of order $\frac12$ in $E$. The fact that the function is entire follows from the expression \eqref{eq:Z0}, the representation \eqref{eq:transK} and the fact that $K_q(t,s)$, $g(x)$ and $\rho(y)$ are independent of $E$.
To show that the order is $\frac12$, note that
\begin{equation*}
|Z_q(t;E)| \leq \frac{2\Gamma(m+1)t^{m+\frac12}\mathrm{e}^{|\lambda|t}}{\sqrt{\pi}\Gamma\big(m+\frac12\big) }
\left(1+\int_0^t |K_q(t,s)|ds \right) 
\end{equation*}
as a consequence of \eqref{lem:Besselorder1} and \eqref{eq:transK}.
Then,
\[ |\Phi(x;E)| \leq A(x)  \exp(|\lambda|\tau(x)), \]
in which
\[ A(x) = \rho_4(g(x))\frac{2\Gamma(m+1)\tau(x)^{m+\frac12}}{\sqrt{\pi}\Gamma\left(m+\frac{1}{2}\right)}
\left(1+\int_0^{\tau(x)} |K_q(\tau(x),s)|\,\mathrm{d}s \right) \]
is independent of $\lambda$. Since $|\lambda| = O(|E|^{\frac12})$, it follows that $E\mapsto \Phi(x;E)$ has indeed order $\frac{1}{2}$.

Now, we show that the function $E\mapsto \Phi(1;E)$ has purely real zeros, $\tilde{E}_n$, satisfying $\tilde{E}_n \leq -c_1 n^2$ for $n\in\mathbb N$ and suitable constant $c_1>0$.
Indeed, from \eqref{eq:atgprime}, one has $\Phi(1;E)=0$ if and only if $Z_q(b_2;E) = 0$ where 
$b_2=\tau(1)$. In the framework of Lemma~\ref{lemma:5}, $Z_q$ is an eigenfunction of the eigenvalue equation \eqref{eq:zLNF3}. The zeros of $\Phi(1;\cdot)$ are given by $-4m\tilde{E}_n = \lambda_n^2$, in which the $\lambda_n^2$
are the eigenvalues of the self-adjoint problem
\[ -Z''(t) + \frac{m^2-\frac{1}{4}}{t^2} Z(t) + q(t) Z(t) = \lambda^2 Z(t), \qquad \qquad Z(b_2) = 0, \]
with Friedrichs condition at $t=0$ if required (see Remark~\ref{rem:domainZeq}).
The $\tilde{E}_n$ are therefore purely real. They will satisfy the required asymptotics, if the correponding 
$\lambda_n$ are $\geq O(n)$ for large $n$. The latter follows from the fact that $q$ is bounded,
 via a comparison result, removing the term $q(t)Z(t)$ and reducing the question to the corresponding one for Bessel zeros already considered in the proof of Lemma~\ref{lem:speclin}.

We complete the proof as follows. Let 
$
\varphi(\lambda)=\Phi\big(1,-\frac{\lambda^2}{2m}\big).
$
Then,
\[
   \varphi(i\lambda)=\Phi(1;-E)=\Phi(-1;E)
\]
and so the condition for the matching of the boundary values of $\Phi$ is
\[
     \frac{\varphi(i\lambda)}{\varphi(\lambda)}=1.
\] 
Expanding $\Phi(1;E)$ as a Weierstrass product, using Hadamard's Theorem, gives
\[
     \Phi(1;E)=\mathrm{e}^{\gamma(E)}\prod_{n=1}^\infty \left(1-\frac{E}{\tilde{E}_n}\right)
\]
for an entire function $\gamma(E)$, where the convergence of the product is ensured by the fact that $|\tilde{E}_n|\geq c_1n^2$. 
The fact that $\Phi(1;E)$ has order $\frac{1}{2}$ means that $\gamma(E)$ is constant. Thus
\[
   \frac{\varphi(i\lambda)}{\varphi(\lambda)}= \prod_{n=1}^\infty \frac{1-\frac{\lambda^2}{\lambda_n^2}}{1+\frac{\lambda^2}{\lambda_n^2}}.
\]
Since the $\lambda_n^2$ are on the real line and $\arg(\pm \lambda)\in \left(-\frac{\pi}{4},\frac{\pi}{4}\right)$,
\[
    \left|1-\frac{\lambda^2}{\lambda_n^2}\right|<\left|1+\frac{\lambda^2}{\lambda_n^2}\right|
\]
whence
\begin{equation} \label{modquot}
     \left|\frac{\varphi(i\lambda)}{\varphi(\lambda)}\right|<1
\end{equation}
for $\arg(\pm \lambda)\in \left(-\frac{\pi}{4},\frac{\pi}{4}\right)$. These inequalities are swapped in the complementary $\lambda$-region $\arg(\pm \lambda)\in \left(\frac{\pi}{4},\frac{3\pi}{4}\right)$. Hence, 
\[
     \frac{\varphi(i\lambda)}{\varphi(\lambda)}=1
\]
is only possible for $\arg(\pm \lambda)=\pm\frac{\pi}{4}$, which corresponds to purely imaginary $E$.
\end{proof}

\section{Proof of Theorem~\ref{th:thepseudospectrum}\label{section:5}}

The construction of the pseudo-modes in Proposition~\ref{claim:2} consisted of three steps, which we now generalise. In order to complete steps~2 and 3, we shall need replacements for the inequalities which were proved in Section~\ref{section:4} using explicit Bessel function expressions. 
The idea is to consider the solution for general $f$ as a perturbation of these, via the expression of $\Phi(x;E)$ in \eqref{eq:atg} and the integral representation \eqref{eq:transK}.
We perform this task in the first part of this section and complete the proof in the second part.

We refer to Table~\ref{table1} for a summary of the specific relations between the different parameters appearing in the various statements below.

\begin{lemma}\label{lem:zasy}  Let $E\in\mathbb{C}$ be such that $\arg(E)\not\in \{\frac{k\pi}{2}:k\in\mathbb{Z}\}$. The solution of \eqref{eq:zLNF3} satisfies the following estimate.
There exist a constant $R>0$ independent of $q(\cdot)$ and a constant $C_q>0$, such that 
\begin{equation}
\label{eq:zasy}
Z_q(t;E) = Z_0(t;E) \left( 1 + W_q(t;E)\right)
\end{equation}
where  
\begin{equation} \label{lesssazy}
     |W_q(t;E)|\leq \frac{C_q}{|E|^{\frac12}}
\end{equation}
for all $|E|>R$ and $t\in[0,b_2]$.
For each sufficiently large $R$ and sufficiently small $\alpha$, the estimate on $W_q$ holds uniformly in the region 
\begin{equation} \label{setE}
\left\{E\in\mathbb{C}\,:\,|E|>R,\,\inf_{k=0,1,2,3}\left|\arg(E)-\frac{k\pi}{2}\right|\geq \alpha\right\}. 
\end{equation}
\end{lemma}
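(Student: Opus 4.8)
The plan is to read $W_q$ off directly from the transformator representation \eqref{eq:transK}. Dividing \eqref{eq:transK} by $Z_0(t;E)$ and comparing with \eqref{eq:zasy} forces
\[
    W_q(t;E)=\frac{1}{Z_0(t;E)}\int_0^t K_q(t,s)\,Z_0(s;E)\,\mathrm{d}s .
\]
This is well defined because, by \eqref{eq:Z0}, $Z_0(t;E)=\frac{2^m\Gamma(m+1)}{\lambda^m}\,t^{\frac12}J_m(\lambda t)$ has no zero for $t\in(0,b_2]$: the zeros of $J_m$ are real and positive, whereas $\lambda t$ lies strictly off the positive real axis. Writing $M=\sup_{0\leq s\leq t\leq b_2}|K_q(t,s)|<\infty$ (property ii) of $K_q$), the whole statement reduces to the $q$-independent estimate
\[
    \int_0^t |Z_0(s;E)|\,\mathrm{d}s\leq \frac{C}{|\lambda|}\,|Z_0(t;E)|,\qquad t\in(0,b_2],
\]
for a constant $C$ uniform over the region \eqref{setE}. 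Indeed this yields $|W_q|\leq MC/|\lambda|$, and since $|\lambda|=2\sqrt m\,|E|^{\frac12}$ by \eqref{eq:ren}, the bound \eqref{lesssazy} follows with $C_q=MC/(2\sqrt m)$; as $C$ involves only $Z_0$, the threshold $R$ is independent of $q$.

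By the reflection symmetries in the two axes, it suffices to treat $E$ in the open first quadrant, so I take $\lambda=|\lambda|\mathrm{e}^{i(\frac{\pi}{2}+\theta)}$ as in \eqref{lambdaandE} with $\theta=\tfrac12\arg(E)\in[\delta,\tfrac{\pi}{4}-\delta]$ and $\delta=\alpha/2$; in particular $\cos\theta>\cos\frac{\pi}{4}=\tfrac1{\sqrt2}$ is bounded below. Cancelling the common factor $2^m\Gamma(m+1)/\lambda^m$ and substituting $u=|\lambda|s$, $w=|\lambda|t$, the displayed estimate becomes the scale-free claim
\[
    \int_0^{w}\psi_\theta(u)\,\mathrm{d}u\leq C\,\psi_\theta(w),\qquad w>0,\qquad \psi_\theta(u)=u^{\frac12}\,|J_m(\mathrm{e}^{i(\frac{\pi}{2}+\theta)}u)|,
\]
to be proved uniformly in $w>0$ and $\theta\in[\delta,\tfrac{\pi}{4}-\delta]$; the factor $1/|\lambda|$ is exactly the Jacobian of the rescaling.

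I would prove this claim by splitting at a fixed threshold $w_0$, chosen large using the uniformity of \eqref{preasympBessel} on the sector $\arg z\in[\frac{\pi}{2}+\delta,\frac{3\pi}{4}-\delta]$. For $u\geq w_0$ one has $\operatorname{Im}(\mathrm{e}^{i(\frac{\pi}{2}+\theta)}u)=u\cos\theta>0$, so \eqref{preasympBessel}–\eqref{asympBessel} give, uniformly in $\theta$, both $\psi_\theta(u)\sim(2\pi)^{-\frac12}\mathrm{e}^{u\cos\theta}$ and, after enlarging $w_0$, the lower bound $\psi_\theta(w)\geq\tfrac12(2\pi)^{-\frac12}\mathrm{e}^{w\cos\theta}$ for $w\geq w_0$. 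Hence $\int_{w_0}^{w}\psi_\theta\leq C\,\mathrm{e}^{w\cos\theta}$, and together with the bounded quantity $\int_0^{w_0}\psi_\theta$ this controls the ratio on $\{w\geq w_0\}$. For $w\leq w_0$ I use the small-argument bound $|J_m(z)|\leq C|z|^m$ from \eqref{lem:Besselorder1} with $|z|\leq w_0$, giving $\psi_\theta(u)\leq C u^{m+\frac12}$ and hence $\int_0^w\psi_\theta\leq C' w^{m+\frac32}$, together with $\psi_\theta(w)\geq c\,w^{m+\frac12}$; the ratio is then $\leq (C'/c)\,w\leq (C'/c)\,w_0$. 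The remaining lower bound $\psi_\theta(w)\geq c\,w^{m+\frac12}$ on $(0,w_0]$ holds because $|J_m(\mathrm{e}^{i(\frac{\pi}{2}+\theta)}w)|/w^m$ is continuous and strictly positive on $(0,w_0]$ with positive limit $(2^m\Gamma(m+1))^{-1}$ at $w=0$, so by compactness of $[\delta,\tfrac{\pi}{4}-\delta]\times[0,w_0]$ it is bounded below uniformly in $\theta$.

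The main obstacle is precisely this uniform control of the integral ratio through the transition region, where $J_m$ passes from power-like to exponential behaviour: away from $w=0$ and $w=\infty$ it rests on the fact that $J_m$ has no zeros off the positive real axis, so that $\psi_\theta$ stays bounded below on compact $w$-ranges uniformly in $\theta$, and on the uniformity of the remainder $o(1)$ in \eqref{preasympBessel}. The reduction to the first quadrant and the identity $|\lambda|=2\sqrt m\,|E|^{\frac12}$ are then routine.
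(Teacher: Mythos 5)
Your proposal is correct and takes essentially the same route as the paper's proof: both read $W_q$ off as the transformator integral divided by $Z_0$, bound $K_q$ by its sup norm, and split the integral at the scale $|\lambda|^{-1}$ (the paper at $\delta/\sqrt{|E|}$, you at a fixed $w_0$ after rescaling), controlling the near-origin piece by the power-law behaviour and non-vanishing of $J_m$ off the real axis and the rest by the uniform exponential asymptotics \eqref{preasympBessel}. Your scale-free reformulation $\int_0^w\psi_\theta(u)\,\mathrm{d}u\le C\,\psi_\theta(w)$ is a tidy repackaging of the same estimates rather than a different method.
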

\begin{proof}
Without loss of generality  we assume that $\arg(E)\in(0,\frac{\pi}{2})$. Recall the correspondence $\lambda^2=-4mE$, for $\lambda=|\lambda|\mathrm{e}^{i(\theta+\frac{\pi}{2})}$ and $\theta\in(0,\frac{\pi}{4})$. Fixing $\theta$ is equivalent to fixing $\arg(E)$.

Let $\delta>0$ be small. We use \eqref{eq:transK}, splitting the integral as
\begin{equation*}
Z_q(t;E) = Z_0(t;E)\left\{1 + \left(\int_{0}^{\min(t,\frac{\delta}{\sqrt{|E|}})} +  \int_{\min(t,\frac{\delta}{\sqrt{|E|}})}^t\right) K_q(t,s) \frac{Z_0(s;E)}{Z_0(t;E)}\,\mathrm{d}s \right\}
\end{equation*}
and we show that both integrals are $O(|\lambda|^{-1})$. 
Note that $0<\frac{s}{t}< 1$ in both integrals.
From \eqref{eq:Z0} we have
\[ \frac{Z_0(s;E)}{Z_0(t;E)} = \left(\frac{s}{t}\right)^{\frac12}\frac{J_m(\lambda s)}{J_m(\lambda t)}. \]

 For the first integral, we have to estimate the quotient $\frac{J_m(\lambda s)}{J_m(\lambda t)}$ either in the triangle \[\mathsf{T}=\left\{(s,t)\in\mathbb{R}^2:0<s<t<\frac{2\delta  m^{\frac12}}{|\lambda|}\right\},\] 
whenever $t< \frac{2\delta m^{\frac12}}{|\lambda|}$,
 or in the strip \[\mathsf{S}=\left\{(s,t)\in\mathbb{R}^2:0<s<\frac{2\delta  m^{\frac12}}{|\lambda|}\right\}\] whenever $t\geq \frac{2\delta m^{\frac12}}{|\lambda|}$.  
Since $\frac{J_m(\alpha)}{J_m(\beta)}$ is bounded uniformly for all $|\alpha|\leq |\beta|\leq 2\delta m^{\frac12}$, provided $\delta$ is chosen sufficiently small to avoid the first zero of $J_m$, we know that \[\sup_{(s,t)\in \mathsf{T}} \left|\frac{J_m(\lambda s)}{J_m(\lambda t)}\right|<c_1\] where the constant $c_1$ is independent of $|\lambda|$. On $\mathsf{S}$, the numerator $J_m(\lambda s)$ is bounded independently of $\lambda$ because
$|\lambda s|$ is bounded, while in the denominator the term $J_m(\lambda t)$ is bounded away from zero by \eqref{preasympBessel} and the fact that $\arg(\lambda t)\in\big(\frac{\pi}{2},\frac{3\pi}{4}\big)$. Note that, by hypothesis, $\lambda t$ lies on a fixed ray $\lambda t = |\lambda t|\exp(i(\theta+\frac{\pi}{2}))$, away from Bessel zeros. The above, together with the property ii), that the sup-norm of the transformator kernel $K_q$ is finite, implies that
\[ \left|\int_{0}^{\frac{\delta}{\sqrt{|E|}}} \hspace{-1mm} K_q(t,s) \frac{Z_0(s;E)}{Z_0(t;E)}\,\mathrm{d}s \right| \leq c_2 |\lambda|^{-1}, \]
for a constant $c_2>0$ depending only upon $\arg(\lambda)$. This gives the estimate for the first integral.

For the second integral, first recall the right hand side of \eqref{asympBessel}. Then, we have
\begin{align*}
\left| \frac{Z_0(s;E)}{Z_0(t;E)} \right| &=\left(\frac{s}{t}\right)^{\frac12} \frac{J_m(\lambda s)}{J_m(\lambda t)} \\ &=
\left(\frac{s}{t}\right)^{\frac12} \frac{|J_m(i|\lambda| s e^{i \theta})|}{|J_m(i|\lambda| t e^{i \theta})|} \\
&\sim \mathrm{e}^{|\lambda| (s-t)\cos \theta}
\end{align*}
where the limit is uniform for $\theta\in[0,\frac{\pi}{4}-\delta]$. Therefore,
there exists a constant $c_3>0$, such that 
\[ \left| \frac{Z_0(s;E)}{Z_0(t;E)} \right| \leq c_3\exp(|\lambda|(s-t)\cos\theta), \]
for all $\theta\in[0,\frac{\pi}{4}-\delta]$ and $|\lambda|>1$.
Hence, the second integral is bounded by
\[ c_4 \int_{\frac{\delta}{\sqrt{|E|}}}^{t} \exp(-|\lambda|(t-s)\cos\theta )\,\mathrm{d}s \leq \frac{c_4}{|\lambda|\cos\theta}, \]
which is $O(|\lambda|^{-1})$ for large $|\lambda|$.

This ensures the existence of $R>0$ and $C_q>0$ satisfying \eqref{eq:zasy}. Note that $c_1$, $c_2$ and $c_4$ depend on $q(\cdot)$, but  we can choose $R>0$ independent of $q(\cdot)$. Also note that all the estimates above are unform with respect to $\theta$ and $t$.   
\end{proof}

By \eqref{eq:zasy}, the solution $\Phi(x;E)$ is such that
\begin{equation} \label{asympeigen}
\begin{aligned}
\Phi(x;E)&=\rho_4(g(x))\frac{2^m\Gamma(m+1)}{\lambda^m}\tau(x)^{\frac12} J_m(\lambda \tau(x)) \big(1+O(|\lambda|^{-1})\big) \\
\Phi(x;-E)&=\rho_4(g(x)) \frac{2^m\Gamma(m+1)}{(i\lambda)^m}\tau(x)^{\frac12} J_m(i\lambda \tau(x))\big(1+O(|\lambda|^{-1})\big)
\end{aligned}
\end{equation}
as $|\lambda|\to\infty$, where the $O(|\lambda|^{-1})$ bounds are uniform for $E=-\frac{\lambda^2}{4m}$ in the region \eqref{setE} of Lemma~\ref{lem:zasy}.

Now we consider estimates leading to upper bounds for the norm of the action of $(L-E)$ on the pseudo-modes constructed below.

\begin{lemma}\label{predPhibound} 
Let $\lambda=|\lambda|\mathrm{e}^{i\left(\frac{\pi}{2}+\theta\right)}$ where $\theta\in\Big(0,\frac{\pi}{4}\Big)$ and $|\lambda|> \sqrt{2}$. Set $E\equiv E_\lambda=-\frac{\lambda^2}{4m}$. Let $0<\delta<\frac{\pi}{8}$ be fixed.
Then, there exists a constant $C_f>0$ only depending on $f(\cdot)$ and $\delta$, such that
\begin{equation} \label{mm:upperbound}
    |\Phi(x;E)|\leq C_f
\end{equation}
and
\begin{equation} \label{boundphiprime} |\Phi'(x;E)| \leq C_f|\lambda|^2, \end{equation}
for all $x\in\left[-\frac{2}{|\lambda|^2},-\frac{1}{|\lambda|^2}\right]$ and $\theta\in[\delta,\frac{\pi}{4}-\delta]$.
\end{lemma}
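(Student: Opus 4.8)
The plan is to read both estimates off the integral representation \eqref{eq:atgprime}, $\Phi(x;E)=\rho_4(g(x))Z_q(\tau(x);E)$, combined with Lemma~\ref{lem:zasy}. Throughout, the decisive feature of the interval $x\in[-2/|\lambda|^2,-1/|\lambda|^2]$ is that, since $\tau(x)\sim|x|^{\frac12}$ by Lemma~\ref{lem:2}, the Bessel argument stays bounded, $|\lambda\tau(x)|\le\sqrt2(1+o(1))$, so we sit in the small-argument regime. For \eqref{mm:upperbound} I would use \eqref{eq:zasy}--\eqref{lesssazy} to write $Z_q=Z_0(1+W_q)$ with $|1+W_q|\le2$ once $|\lambda|\ge R$ (uniformly for $\theta\in[\delta,\frac{\pi}{4}-\delta]$), insert the explicit $Z_0$ from \eqref{eq:Z0}, and bound $J_m(\lambda\tau(x))$ by the growth-order-one inequality \eqref{lem:Besselorder1} with $z=\lambda\tau(x)$, whose exponential factor is then at most $\mathrm{e}^2$. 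The factors $|\lambda|^{\pm m}$ cancel, and since $\ell+1=m+\frac12$ what remains is $(1+\rho(g(x)))^{-\frac14}(\tau(x)/g(x))^{m+\frac12}$, which tends to $1$ as $x\to0$ and so is bounded by a constant depending only on $f$. The very same computation gives $|\Phi(x;E)|\le C_f$ on the whole interval $[-2/|\lambda|^2,0]$, which I record for use below.

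For the derivative bound \eqref{boundphiprime} I would deliberately \emph{not} differentiate \eqref{eq:atgprime} or \eqref{eq:transK}, since that forces control of $\partial_tK_q$ and $\partial_tZ_q$ near the singular endpoint $t=0$, which properties i)--ii) do not supply. Instead I exploit the quasi-derivative $P(x)=f(x)\Phi'(x)+\Phi(x)$. By \eqref{eq:1} it obeys $P'=E\Phi$, and regularity at the origin gives $P(0)=1$: recall from just after \eqref{superBessel} that $f(x)\Phi'(x)=\varepsilon g(x)F'(g(x))$, and $yF'(y)\to0$ as $y\to0$ by \eqref{asympZat0}, so $f\Phi'\to0$ and $P(0)=\Phi(0)=1$. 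Integrating, $P(x)=1+E\int_0^x\Phi(s)\,\mathrm{d}s$; using $|\Phi|\le C_f$ on $[-2/|\lambda|^2,0]$, $|x|\le2/|\lambda|^2$ and $|E|=|\lambda|^2/(4m)$ yields $|P(x)|\le1+C_f/(2m)$, independent of $\lambda$. Finally $\Phi'=(P-\Phi)/f$, and on $[-2/|\lambda|^2,-1/|\lambda|^2]$ one has $|f(x)|\ge\varepsilon/|\lambda|^2$ for $|\lambda|$ large, because $f(x)=2\varepsilon x/(1+xr(x))$ with $|x|\ge|\lambda|^{-2}$ and $|1+xr(x)|\le2$. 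Hence $|\Phi'(x)|\le(|P|+|\Phi|)/|f|\le(1+C_f/(2m)+C_f)|\lambda|^2/\varepsilon$, which is \eqref{boundphiprime} after enlarging $C_f$. The asymptotic input forces $|\lambda|\ge R$ for some $R=R(f,\delta)$; the complementary compact range $\sqrt2<|\lambda|\le R$ is handled by the joint continuity of $\Phi$ and $\Phi'$ and compactness, since there $x$ stays bounded away from the origin.

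The crux, and the only real obstacle, is the derivative bound: the transformator machinery controls $Z_q$ and its representation kernel in sup-norm but not their $t$-derivatives near $t=0$, so a head-on differentiation stalls. The quasi-derivative $P=f\Phi'+\Phi$ sidesteps this entirely — its derivative is merely $E\Phi$, so its size near the origin is governed by the bound on $\Phi$ already in hand, while the near-vanishing of $f$ supplies the factor $1/f\sim|\lambda|^2$. This is the structural counterpart, in the transformed variables, of the explicit Bessel identity $\frac{d}{dy}\big[y^{-m}J_m(\lambda y)\big]=-\lambda y^{-m}J_{m+1}(\lambda y)$ that made the linear case of Proposition~\ref{claim:2} transparent.
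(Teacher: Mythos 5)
Your proof is correct, and for the bound \eqref{mm:upperbound} it coincides with the paper's argument (small Bessel argument $|\lambda\tau(x)|\lesssim\sqrt2$, the factorisation $Z_q=Z_0(1+W_q)$ from Lemma~\ref{lem:zasy}, and the boundedness of $J_m(z)/z^m$ near $z=0$). For the derivative bound \eqref{boundphiprime} you take a genuinely different route. The paper differentiates \eqref{eq:atgprime} directly, splitting $\Phi'$ into three terms $A+B+D$, and handles the problematic term $D$ containing $Z_q'$ by writing $Z_q'(\tau(x))=\int_0^{\tau(x)}Z_q''$ and substituting for $Z_q''$ from the differential equation \eqref{eq:zLNF3}, so that only the sup bound on $Z_q$ is needed; like you, the authors never differentiate the transformator representation \eqref{eq:transK}. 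Your quasi-derivative argument --- $P=f\Phi'+\Phi$, $P'=E\Phi$, $P(0)=1$, hence $|P|\leq 1+C_f/(2m)$ on the interval, and then $\Phi'=(P-\Phi)/f$ with $|f|\geq\varepsilon/|\lambda|^2$ --- achieves the same end more economically: it needs only the sup bound on $\Phi$ down to the origin, the equation \eqref{eq:1}, and the lower bound on $|f|$, and it avoids the bookkeeping of $\rho_4$, $g'/g$ and $Z_q'/g^{\ell}$ entirely. The one point you should tighten is the claim $P(0)=1$: you infer $f\Phi'\to0$ by in effect differentiating the asymptotic relation \eqref{asympZat0}, which is not automatic. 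The clean justification is already in the paper: applying the variation-of-parameters identity \eqref{eq:star} with $v=E\Phi$ gives $f\Phi'+\Phi=\Phi(0)+E\int_0^x\Phi$ exactly, since the term $g^{-1/\varepsilon}(x)\int_0^x E\Phi\, g^{1/\varepsilon}$, which equals $-f\Phi'$, is $O(|x|^{1/2})$ by \eqref{eq:contat0}. With that substitution your argument is complete, including the treatment of the compact range $\sqrt2<|\lambda|\leq R$ by continuity.
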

\begin{proof}
Everywhere in this proof $|\lambda|> \sqrt{2}$ is fixed and none of the constants $c_j$ depends on $|\lambda|$ or $\theta$. Whenever this is not obvious, we will explain the reason for this independence.

First, let us show \eqref{mm:upperbound}. From Lemma~\ref{lem:2} and the definition of $\tau(x)$, we have that $\tau(x)\sim g(x) \sim |x|^{\frac12}$ as $x\to 0$. Also, recall that $\rho_4(y)=O(y^{-(\ell+1)})$ as $y\to 0$. We remark here that these functions are independent of $\theta$. Then, from the second equation in \eqref{asympeigen} it follows that, 
\[
\begin{aligned}
   |\Phi(x;E)|& = \rho_4(g(x)) \left[\frac{2^m\Gamma(m+1)}{|\lambda|^{m}}\right]\tau(x)^{\frac12} \Big|J_m(-|\lambda|\tau(x)\mathrm{e}^{i\theta})\Big|(1+O(|\lambda|^{-1}))\\ 
&=2^m\Gamma(m+1) \rho_4(g(x)) \tau(x)^{m+\frac12} \frac{\big|J_m(-|\lambda|\tau(x)\mathrm{e}^{i\theta})\big|}{(|\lambda|\tau(x))^m}(1+O(|\lambda|^{-1}))\\ 
&\leq c_1 \sup_{|z|\leq \sqrt{2}+\frac12}\left|\frac{J_m(z)}{z^m}\right|(1+O(|\lambda|^{-1})) \\ & \leq c_2,
\end{aligned}
\]
 for all $x\in\left[-\frac{2}{|\lambda|^2},-\frac{1}{|\lambda|^2}\right]$ and $\theta\in[\delta,\frac{\pi}{4}-\delta]$.
Here the constants $c_1>0$ depends only on $f(\cdot)$ and, according to  Lemma~\ref{lem:zasy}, $c_2>0$ can be chosen to depend only on $f(\cdot)$ and $\delta$. This gives \eqref{mm:upperbound} as required in the statement.

Now, consider \eqref{boundphiprime}. The expression \eqref{eq:atgprime} and a straightforward calculation, give 
\begin{align*} \Phi'(x;E) & = -\frac{\rho'(g(x))g'(x)}{4} [1+\rho(g(x))]^{-\frac{5}{4}} \frac{Z_q(\tau(x);E)}{g(x)^{\ell+1}} \\
& -(\ell+1)\frac{g'(x)}{g(x)} [1+\rho(g(x))]^{-\frac14}\frac{Z_q(\tau(x);E)}{g(x)^{\ell+1}} \\
& + \frac{g'(x)}{g(x)}[1+\rho(g(x))]^{\frac14}\frac{Z_q'(\tau(x);E)}{g(x)^{\ell}} \\
&=A(x)+B(x)+D(x).
\end{align*}
We provide estimates for the modulus of these three quantities, that lead to \eqref{boundphiprime}.

\underline{Estimates for $|A(x)|$ and $|B(x)|$}. 
Recall the asymptotics in the regime $x\to 0$: $g(x)\sim |x|^{\frac12}$, $\rho(g(x))=O(x)$ and $\rho'(g(x))g'(x)=O(1)$, where the quantities involved only depend on $f(\cdot)$.
By writing $Z_q(\tau(x);E)$ in terms of $\Phi(x;E)$ using \eqref{eq:atgprime}, and by applying \eqref{mm:upperbound}, we then know that there exists a constant $c_3>0$, only depending on $f(\cdot)$ and $\delta$, such that
\[
    |A(x)|=\frac{|\rho'(g(x))g'(x)|}{4} [1+\rho(g(x))]^{-1} |\Phi(x;E)|\leq c_3
\]
for all $x\in\left[-\frac{2}{|\lambda|^2},-\frac{1}{|\lambda|^2}\right]$ and $\theta\in[\delta,\frac{\pi}{4}-\delta]$.
Similarly, since $\frac{g'(x)}{g(x)} \sim \frac12 x^{-1}$
 as $x\to 0$, there exists a constant $c_4>0$, only depending on $f(\cdot)$ and $\delta$, such that
\[
|B(x)|=(\ell+1)\left|\frac{g'(x)}{g(x)}\right||\Phi(x;E)|\leq c_4|\lambda|^2
\]
 for all $x\in\left[-\frac{2}{|\lambda|^2},-\frac{1}{|\lambda|^2}\right]$ and $\theta\in[\delta,\frac{\pi}{4}-\delta]$.
 
\underline{Estimate for $|D(x)|$}. 
By integrating, we obtain,
\[ Z_q'(\tau(x);E) = \int_0^{\tau(x)} Z_q''(s;E)\,\mathrm{d}s = \int_0^{\tau(x)}\left(\frac{m^2-\frac14}{s^2}-q(s)-\lambda^2\right)Z_q(s;E)\mathrm{d}s. \]
According to Lemma \ref{lem:zasy} and \eqref{eq:Z0}, we have
\begin{equation} \label{triplestar}
\begin{aligned} Z_q(t;E) & = Z_0(t;E)\Big(1+O(|E|^{-\frac12})\Big)  \\ & = 2^m \Gamma(m+1)  t^{m+\frac12}\frac{J_m(\lambda t)}{(\lambda t)^m} \Big(1+O(|\lambda|^{-1})\Big)
\end{aligned}\end{equation}
where,  for fixed $q(\cdot)$, $m$ and $\delta$, the $O(|\lambda|^{-1})$ bound is uniform for all $\theta\in[\delta,\frac{\pi}{4}-\delta]$ as $|\lambda|\to\infty$. 
Since
\[  \frac{|J_m(z)|}{|z|^m}\sim \frac{1}{2^m\Gamma(m+1)}\]
for $|z|\to 0$, then there exists a constant $c_5>0$ such that
\[ \begin{aligned} |Z_q'(\tau(x);E)| 
&\leq c_5 \int_0^{\tau(x)}\left(\frac{m^2-\frac14}{s^2}+|q(s)| + |\lambda|^2\right)s^{m+\frac12}2^m\Gamma(m+1) \frac{|J_m(\lambda s)|}{(|\lambda| s)^m}\mathrm{d}s  \\
&\leq c_5\int_0^{\tau(x)}\left(\frac{m^2-\frac14}{s^2}+\| q \|_\infty + |\lambda|^2\right)s^{m+\frac12}\,\mathrm{d}s\end{aligned} \]
for all $x\in \big[-\frac{2}{|\lambda|^2},-\frac{1}{|\lambda|^2}\big]$ and $\theta\in[\delta,\frac{\pi}{4}-\delta]$.
Moreover, for all such $(x,\theta)$,
\[ \int_0^{\tau(x)} \frac{m^2-\frac14}{s^2}s^{m+\frac12}\mathrm{d}s = \left(m+\frac12\right)\tau(x)^{m-\frac12} \leq c_6|\lambda|^{\frac12-m} \]
 and 
\[ \int_0^{\tau(x)}|\lambda|^2 s^{m+\frac12}\mathrm{d}s = |\lambda|^2 \frac{\tau(x)^{m+\frac{3}{2}}}{m+\frac{3}{2}} \leq
 c_7|\lambda|^{\frac12-m}, \]
 where $c_6,\,c_7>0$ depend only on $m$. Since $m>\frac12$ and $\frac12-m=-\ell$, we then can conclude that there exists a constant $c_8>0$ only depending on $f(\cdot)$ and $\delta$, such that
\[
\frac{|Z_q'(\tau(x);E)|}{g(x)^{\ell}}\leq c_8
\]
for all $x\in\left[-\frac{2}{|\lambda|^2},-\frac{1}{|\lambda|^2}\right]$ and  $\theta\in[\delta,\frac{\pi}{4}-\delta]$. Thus, for a suitable constant $c_9>0$ only depending on $f(\cdot)$ and $\delta$,
\[
\begin{aligned}   
|D(x)|&=\left|\frac{g'(x)}{g(x)}\right|[1+\rho(g(x))]^{\frac14}\left|\frac{|Z_q'(\tau(x);E)|}{g(x)^{\ell}}\right|\leq c_9|\lambda|^2
\end{aligned}
\] 
for all $x\in\left[-\frac{2}{|\lambda|^2},-\frac{1}{|\lambda|^2}\right]$ and  $\theta\in[\delta,\frac{\pi}{4}-\delta]$.

These estimates for $|A(x)|,\,|B(x)|$ and $|D(x)|$, directly imply \eqref{boundphiprime}
and confirm the validity of the lemma.
\end{proof}

We are now ready to complete the proof of Theorem~\ref{th:thepseudospectrum}, which will be a direct corollary of the next proposition. 
Similarly to Proposition~\ref{claim:2}, here we can choose the constant $R>0$ independent of $\lambda$, $\delta$, $\varepsilon$ and $r(\cdot)$. The constant $C_{f}>0$ on the other hand, is independent of $\lambda$ and $\delta$, but depends on $r(\cdot)$ and $\varepsilon$.

\begin{proposition}\label{thm:genclaim2} 
Let $\lambda\in\mathbb{C}$ be given by \eqref{lambdaandE} and set $E\equiv E_\lambda=-\frac{\lambda^2}{4m}$.   Let $0<\delta<\frac{\pi}{8}$ be fixed. There exists a constant $R>0$ and a constant $C_f>0$, depending only upon $f(\cdot)$, such that for all $|\lambda|\geq R$ and $\theta\in[\delta,\frac{\pi}{4}-\delta]$, we can find a pseudo-mode $\quasi\equiv \quasi_{\lambda}\in \operatorname{Dom}(L)$ satisfying the inequality
\begin{equation}  \label{pseudospecasympgen}
    \frac{\|(L-E_{\lambda})\quasi_{\lambda}\|}{\|\quasi_{\lambda}\|}\leq C_f|\lambda|^{m+\frac32} \mathrm{e}^{-\tau(\frac12)|\lambda|\sin \theta}. 
\end{equation} 
\end{proposition}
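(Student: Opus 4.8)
The plan is to reproduce, step by step, the three-part architecture of the proof of Proposition~\ref{claim:2}, now feeding in the perturbative information of Lemmas~\ref{lem:zasy} and \ref{predPhibound} and the asymptotics \eqref{asympeigen} wherever the linear-$f$ argument relied on closed-form Bessel identities. I would take the pseudo-mode to be $\quasi=\chi\Phi$ with exactly the periodiser $\chi$ of \eqref{eq:chidef}, where now $\Phi(\pm1)\equiv\Phi(\pm1;E)$ denotes the general-$f$ regular solution. By construction $\quasi\in\mathrm{C}^2([-1,1])$ and $\quasi(-1)=\chi(-1)\Phi(-1)=\Phi(-1)=\chi(1)\Phi(1)=\quasi(1)$, so $\quasi\in\operatorname{Dom}(L)$.

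First I would dispose of Steps~1 and 2. By parity $\Phi(-1;E)=\Phi(1;-E)$, so dividing the two relations of \eqref{asympeigen} at $x=1$ and using \eqref{asympBessel} at the arguments $\lambda\tau(1)$ (argument $\tfrac\pi2+\theta$) and $i\lambda\tau(1)$ (argument $\pi+\theta$) gives
\[
\left|\frac{\Phi(-1;E)}{\Phi(1;E)}\right|\sim\mathrm{e}^{|\lambda|\tau(1)(\sin\theta-\cos\theta)},
\]
which is strictly less than $1$ for large $|\lambda|$ since $0<\theta<\tfrac\pi4$; hence $\chi$ still satisfies the derivative bounds \eqref{star} up to fixed constants. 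For the lower bound, $\chi\equiv1$ on $[-1,-\tfrac12]$ once $|\lambda|$ is large, so there $\quasi=\Phi$; applying $\Phi(x;E)=\Phi(-x;-E)$, the second relation of \eqref{asympeigen} and \eqref{asympBessel} (argument $\pi+\theta$) yields, uniformly in $\theta\in[\delta,\tfrac\pi4-\delta]$,
\[
|\Phi(x;E)|\sim\frac{2^m\Gamma(m+1)\,\rho_4(g(x))}{\sqrt{2\pi}\,|\lambda|^{m+\frac12}}\,\mathrm{e}^{|\lambda|\tau(|x|)\sin\theta},\qquad x\in\bigl[-1,-\tfrac12\bigr].
\]
Since $\tau(|x|)\ge\tau(\tfrac12)$ here and $\rho_4(g(\cdot))$ is a fixed positive continuous function, integrating the square over $[-1,-\tfrac12]$ and taking roots produces $\|\quasi\|\ge c_f\,|\lambda|^{-(m+\frac12)}\mathrm{e}^{\tau(\frac12)|\lambda|\sin\theta}$ for $|\lambda|\ge R$, the exact analogue of \eqref{lowerdenominator} with $\tfrac1{\sqrt2}$ replaced by $\tau(\tfrac12)$.

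Next I would carry out Step~3. Since $(L-E)\Phi=0$, the same product-rule identity $(L-E)\quasi=\Phi\,[L\chi]+2f\chi'\Phi'$ shows that $(L-E)\quasi$ is supported on $\bigl[-\tfrac{2}{|\lambda|^2},-\tfrac1{|\lambda|^2}\bigr]$. On this interval $|f(x)|=O(|\lambda|^{-2})$ while $f'$ is bounded, and Lemma~\ref{predPhibound} supplies $|\Phi|\le C_f$ and $|\Phi'|\le C_f|\lambda|^2$; combined with $|\chi'|\le C|\lambda|^2$ and $|\chi''|\le C|\lambda|^4$ from \eqref{star}, this gives $|L\chi|=|f\chi''+(f'+1)\chi'|\le C_f|\lambda|^2$ and $|2f\chi'\Phi'|\le C_f|\lambda|^2$ pointwise, hence $|(L-E)\quasi|\le C_f|\lambda|^2$. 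As the interval has length $|\lambda|^{-2}$, integrating yields $\|(L-E)\quasi\|\le C_f|\lambda|$, the analogue of \eqref{uppernumerator}. Dividing the Step~3 bound by the Step~2 bound then produces \eqref{pseudospecasympgen}, the uniformity in $\theta\in[\delta,\tfrac\pi4-\delta]$ being inherited from that already built into \eqref{asympeigen} and Lemmas~\ref{lem:zasy}--\ref{predPhibound}.

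The \emph{main obstacle}, and the reason almost all of the labour resides in the preceding lemmas rather than here, is the control of $\Phi$ and $\Phi'$ near the singularity for general $f$, where no convergent Maclaurin series is available and the solution carries the perturbed Calogero term $\tfrac{\ell(\ell+1)}{t^2}$. This is precisely what Lemma~\ref{predPhibound} delivers: its estimate for $Z_q'(\tau(x);E)/g(x)^{\ell}$ rests on the principal-solution normalisation \eqref{asympZat0} making the integral of the singular coefficient $s^{-2}$ against $s^{m+\frac12}$ converge with the correct power of $|\lambda|$, which in turn relies on the transformator representation \eqref{eq:transK} and the perturbative bound \eqref{lesssazy} of Lemma~\ref{lem:zasy}. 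With these in hand, the remaining work is the bookkeeping of constants and the verification that $R$ can be chosen independently of $\lambda$, $\delta$, $\varepsilon$ and $r(\cdot)$, exactly as in the linear case.
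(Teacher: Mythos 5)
Your proposal is correct and follows essentially the same three-step architecture as the paper's own proof: the same periodiser $\chi$, the same lower bound on $\|\quasi\|$ via \eqref{asympeigen}, \eqref{asympBessel} and the positivity of $\rho_4(g(\cdot))$ on $[\frac12,1]$, and the same $O(|\lambda|)$ bound on $\|(L-E)\quasi\|$ from Lemma~\ref{predPhibound} and the support of $\chi'$. The only cosmetic difference is that the paper controls $|\Phi(-1;E)/\Phi(1;E)|<1$ exactly by citing \eqref{modquot} from the proof of Theorem~\ref{lem:specgen}, whereas you derive it asymptotically for large $|\lambda|$, which suffices here.
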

\begin{proof}
As in the proof of Proposition~\ref{claim:2}, we construct the pseudo-mode $\quasi(x)$ as the product of the periodiser $\chi(x)$ and the regular solution $\Phi(x,E)$. 

\underline{Step 1}: construction of $\quasi$.
Let $\chi(x)$ be given by \eqref{eq:chidef}. Note that in the expression for $\chi(x)$, the
term $\frac{\Phi(-1;E)}{\Phi(1;E)}$ appears. According to \eqref{modquot} in the proof of Theorem~\ref{lem:specgen} we know that $\left|\frac{\Phi(-1;E)}{\Phi(1;E)}\right|<1$ for all $E=E_{\lambda}$. So the bounds \eqref{star} on the derivatives of $\chi(x)$ continue to be valid.

\underline{Step 2}: lower bound on the denominator of \eqref{pseudospecasympgen}. This extends \eqref{lowerdenominator}.
We show that, for a suitable constant $c_1>0$ and sufficiently large $R>0$, we have
\begin{equation}\label{eq:newlowerdenominator}
 \| \quasi \| \geq c_1 \frac{2^{m} \Gamma(m+1)}{|\lambda|^{m+\frac12}} \mathrm{e}^{\tau(\frac12)|\lambda| \sin \theta}
\end{equation}
for all $|\lambda|\geq R$. 
For this purpose, we first show that
\begin{equation} \label{eq:step2}
     |\Phi(-x;E)|\geq c_1 \frac{2^{m} \Gamma(m+1)}{|\lambda|^{m+\frac12}} \mathrm{e}^{\tau(\frac12)|\lambda| \sin \theta}
\end{equation}
for all $x\in[\frac12,1]$ and $|\lambda|\geq R$. By virtue of the symmetry $\mathcal{P}$, \eqref{eq:atgprime}, Lemma~\ref{lem:zasy}  and \eqref{eq:Z0}, we have
\[ \begin{aligned}
|\Phi(-x;E)| &= |\Phi(x;-E)| \\
&= \rho_4(g(x)) Z_q(\tau(x);E) \\
&= \rho_4(g(x)) Z_0(\tau(x);E)\left(1+W_q(\tau(x);E)\right) 
\\&= \rho_4(g(x))\frac{2^m\Gamma(m+1)}{|\lambda|^m}\tau(x)^{\frac12}J_m(-|\lambda| \tau(x)\mathrm{e}^{i\theta})\left(1+W_q(\tau(x);E)\right) \end{aligned} \]
in which $x>0$ and $|W_q(\tau(x);E)|=O(|\lambda|^{-1})$ satisfies \eqref{lesssazy}. Hence, from the left hand side of  \eqref{asympBessel}, 
\[ |\Phi(-x;E)| \sim |\rho_4(g(x))|\frac{2^m\Gamma(m+1)}{\sqrt{2\pi}|\lambda|^{m+\frac12}}\mathrm{e}^{|\lambda|\tau(x)\sin\theta} \]
for $|E|\to \infty$. From the fact that $g(x)$ is increasing (Lemma~\ref{lem:2}) and $\mathrm{C}^3((0,1])$, it follows that there exists $c_2>0$ such that \[|\rho_4(g(x))|=\left|(1+\rho(g(x)))^{-\frac14}g(x)^{-(m+\frac12)}\right|\geq c_2\] for all $x\in [\frac12,1]$. Therefore, since
$\tau(x)$ is increasing with $x$, then \eqref{eq:step2} follows. Here, note that $c_2$ is independent of $\varepsilon$; 
also that $R>0$ can be chosen independent of $r(\cdot)$ or $\varepsilon$, by making $c_1>0$ dependent on $q(\cdot)$ following Lemma~\ref{lem:zasy}.  By integrating $|\quasi(x)|^2$  over $[-1,-\frac12]$ and taking square roots,
the proof of \eqref{eq:newlowerdenominator} follows.

\underline{Step 3}: upper bound on the numerator of \eqref{pseudospecasympgen}. We complete the proof, by showing that there exists a constant $c_3>0$ such that for all $|\lambda|>\sqrt{2}$, 
\begin{equation} \label{upperProp2} \|(L-E)\quasi\| \leq c_3 |\lambda|. \end{equation} We aim for an upper bound analogous to \eqref{uppernumerator} in the proof of Proposition~\ref{claim:2}. Recall that
\[ (L-E)\quasi = \Phi [L\chi] + 2f\chi'\Phi'. \]
Hence
\begin{align*}
\|(L-E)\quasi\|&\leq \|\Phi L[\chi]\|+\|2f\chi' \Phi' \| \\
     &\leq \frac{1}{|\lambda|}\Big(\|\Phi L[\chi]\|_{\infty}+ 2   \| f\chi' \Phi'\|_{\infty} \Big)
\end{align*}

Consider the second term.
We know that
\[ \sup_{x\in \left[\frac{1}{|\lambda|^2}, \frac{2}{|\lambda|^2}\right]}|f(x)| \leq \frac{2\| f' \|_{\infty}}{|\lambda|^2} \]
for $|\lambda|>\sqrt{2}$. From the bound for $|\chi'(x)|$ in \eqref{star} and from \eqref{boundphiprime}, we then have
\begin{equation} \label{est1} 2\|f\chi'\Phi'\|_\infty    \leq c_4|\lambda|^2 \end{equation}
for $|\lambda|>\sqrt{2}$.

Now, consider the first term. The bounds for $|\chi'(x)|$ and $|\chi''(x)|$ from \eqref{star}, and the fact that their supports lie in $\Big[-\frac{2}{|\lambda|^2},-\frac{1}{|\lambda|^2}\Big]$, give 
\begin{equation}\label{lchibound}
    |L[\chi](x)|=\big|f(x) \chi''(x)+(f'(x)+1)\chi'(x)\big|\leq c_5 |\lambda|^2
\end{equation}
for a constant $c_5>0$ only dependant on $r(\cdot)$. 
According to \eqref{mm:upperbound}, 
\[
    \sup_{\left[-\frac{2}{|\lambda|^2},-\frac{1}{|\lambda|^2}\right]}|\Phi(x;E)|\leq c_{6}.
\]
Then, 
\begin{equation} \label{est2}
   \|\Phi L[\chi]\|_{\infty}\leq c_{7}|\lambda|^2
\end{equation}
for $|\lambda|>\sqrt{2}$.

Estimates \eqref{est1} and \eqref{est2} imply \eqref{upperProp2}. 
\end{proof}

\section{Proof of Theorem~\ref{th:upperboundpseudospectrum}} \label{sec6}
In this final section we show that the uper bound given in Theorem~\ref{th:upperboundpseudospectrum} is valid, by first showing that the resolvent of $L$ is in a suitable Schatten class. The next lemma follows similar results established in \cite[Proposition~4.3]{ChugKarabas} and \cite[Theorem~8]{BMR2012}. We have not found any evidence suggesting that the threshold $\frac23$ in the lemma is not optimal. 

\begin{lemma} \label{LemmaSchatten}
For all $p>\frac23$ and $E\not\in\operatorname{Spec}(L)$, we have
$(L-E)^{-1}\in\mathcal{C}_{p}$.
\end{lemma}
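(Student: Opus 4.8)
The plan is to reduce the statement to a single singular-value estimate for the compact operator $S$ built in the proof of Lemma~\ref{Lemma1}, and then to extract the decay rate $s_n(S)=O(n^{-3/2})$, which makes $p>\frac23$ exactly the summability threshold. First I would use that $\mathcal{C}_p$ is a two-sided ideal together with the resolvent identity
\[
(L-E)^{-1}=(L-E_0)^{-1}+(E-E_0)(L-E)^{-1}(L-E_0)^{-1}
\]
to see that membership in $\mathcal{C}_p$ is independent of the point $E_0$ chosen in the resolvent set. Because $\operatorname{Ker}(L)=\operatorname{Span}\{1\}$ and $\operatorname{Ran}(L)$ has codimension one, a standard decomposition of $\operatorname{L}^2(-1,1)$ adapted to this kernel and cokernel expresses $(L-E_0)^{-1}$ as a finite-rank modification of bounded operators composed with $\tilde S=\tilde L^{-1}$, and $\tilde S$ is in turn a rank-one perturbation of $S$. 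Since finite-rank operators lie in every Schatten class and shift the singular-value counting function only by a bounded amount, it suffices to prove that $S\in\mathcal{C}_p$ for all $p>\frac23$, i.e. that $\sum_n s_n(S)^p<\infty$.

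The core step is then the bound $s_n(S)=O(n^{-3/2})$, which yields the conclusion because $\sum_n n^{-3p/2}<\infty$ precisely when $p>\frac23$. Here I would split $S$ according to the behaviour of its kernel $H(x,z)$ in \eqref{kernelH} near the interior singularity $x=0$. Away from the origin the functions $g$ and $g^{\frac1\varepsilon}$ are $\mathrm{C}^3$ and bounded away from zero, so the off-singular part of $H$ is smooth apart from the Volterra-type jump carried by the indicator $\mathds{1}_{(0,|x|]}$; the associated operator behaves like an inverse second-order Green operator, whose singular values decay at least as fast as $n^{-2}$ and which therefore lies in $\mathcal{C}_p$ for every $p>\frac12$. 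The delicate contribution is the part supported near $x=z=0$, where $g(x)\sim x^{\frac12}$ and hence $g^{\frac1\varepsilon}(x)\sim x^{m}$ with $m=\frac{1}{2\varepsilon}>\frac12$; the substitution $x=\mathrm{e}^{-\sigma}$ turns this piece into an asymptotically scale-invariant Mellin-type operator whose singular values can be read off directly, and it is precisely the half-power degeneracy $g\sim x^{\frac12}$, combined with $\frac{1}{f(x)}\sim\frac{1}{2\varepsilon x}$, that produces the exponent $\frac32$ in place of the $2$ one would obtain for a non-degenerate second-order problem. To keep the bookkeeping uniform I would realise this through a rank-$O(N)$ approximation $S_N$ of $S$ on a mesh graded geometrically towards $x=0$, bounding $\|S-S_N\|$ by the Hilbert--Schmidt norm of the kernel error cell by cell (using smoothness away from $0$ and the scaling estimate near $0$), and concluding $s_N(S)\leq\|S-S_N\|=O(N^{-3/2})$.

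I expect the scaling analysis near the singularity to be the main obstacle: one must verify that the near-origin block really does smooth by exactly $\frac32$ orders, so that the approximation rate $N^{-3/2}$ is attained. This is where the non-normality of $L$ becomes visible, since the singular values decay strictly more slowly than the eigenvalues $|\tilde E_n|^{-1}\sim n^{-2}$ identified in the proof of Theorem~\ref{lem:specgen}; the gap between $n^{-3/2}$ and $n^{-2}$ is the quantitative expression of the fact, recorded in \cite{BMR2012}, that the eigenfunctions of $L$ do not form an unconditional basis, and the matching lower bound on $s_n(S)$ would be what is needed to show that the threshold $\frac23$ is genuinely optimal. The remaining ingredients — the smoothness estimates away from the origin and the resolvent-identity reduction — are routine, and the overall architecture parallels \cite[Proposition~4.3]{ChugKarabas} and \cite[Theorem~8]{BMR2012}; the only new feature is the adaptation to $f(\pm1)\neq0$, which changes the boundary contributions but not the singular behaviour at $x=0$ that governs the Schatten exponent.
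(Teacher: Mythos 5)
Your reduction to the single compact operator $S$ with kernel $H(x,z)$ from \eqref{kernelH} is exactly the paper's first move, but from there the routes diverge: the paper never estimates approximation numbers. It integrates by parts in $z$ to factorise $S=TV$, where $T$ has kernel $\operatorname{sgn}(x)\,|z|^{\gamma}g^{\frac1\varepsilon}(z)\big(g^{\frac1\varepsilon}(x)f(z)\big)^{-1}\mathds{1}_{[0,|x|)}(\operatorname{sgn}(x)z)$ and $V$ is the weakly singular Volterra operator with kernel $\operatorname{sgn}(z)|z|^{-\gamma}\mathds{1}_{[0,|z|)}(\operatorname{sgn}(z)t)$; a direct computation gives $T\in\mathcal{C}_2$ (the origin singularity is what makes this only barely true as $\gamma\to0$), a mean-H\"older criterion from Dunford--Schwartz gives $V\in\mathcal{C}_{\frac{1}{1-\gamma}}$, and the H\"older inequality for Schatten norms yields $S\in\mathcal{C}_p$ with $\frac1p=\frac32-\gamma$, whence the threshold $\frac23$ by letting $\gamma\to0$. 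This factorisation also shows that your accounting of the exponent is off: the decomposition $\frac32=1+\frac12$ assigns one order to the Volterra-type jump of $\partial_zH$ across $z=x$, which is present at \emph{every} scale, and only the remaining half order to the degeneracy $g\sim|x|^{\frac12}$ at the origin, which is what pins $T$ at bare square-integrability. Attributing the whole of $\frac32$ to the interior singularity is misleading, though it does not by itself invalidate your strategy.

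The strategy itself --- a graded-mesh, rank-$O(N)$ approximation with cell-by-cell Hilbert--Schmidt error bounds --- can be made to work and would even give the slightly stronger conclusion $s_n(S)=O(n^{-\frac32})$, but as written the decisive estimate is a plan rather than a proof, and two of your supporting claims need repair. First, the singular values of the near-origin block cannot be ``read off'' after $x=\mathrm{e}^{-\sigma}$: the scale-invariant limit is a Wiener--Hopf convolution on a half-line, which is \emph{not} compact; compactness and the decay rate come only from the residual exponential weight left over by the change of measure, and extracting the rate from that requires essentially the same mesh bookkeeping you defer. Second, your cell-by-cell Hilbert--Schmidt method does not deliver $n^{-2}$ for the part of $S$ away from the origin: because of the derivative jump of $H$ across $z=x$, the diagonal cells already contribute $O(N^{-\frac32})$ to $\|S-S_N\|_{\mathrm{HS}}$ at every dyadic scale, so the claimed $\mathcal{C}_p$, $p>\frac12$, membership of the regular part would need a separate argument (fortunately it is not needed, since $n^{-\frac32}$ everywhere suffices). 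To close the gap you would have to carry out the computation: with blocks $|x|\sim2^{-j}$, roughly $N2^{-\alpha j}$ cells per block for a small $\alpha>0$, low-degree polynomial approximation off the diagonal, the zero approximation on diagonal cells, and truncation of the kernel below $|x|\sim N^{-\frac32}$, one does obtain $\|S-S_N\|_{\mathrm{HS}}=O(N^{-\frac32})$ with $\operatorname{rank}S_N=O(N)$. The paper's factorisation avoids all of this and is the shorter path.
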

\begin{proof}
Consider the reduced operator 
$\tilde{L}:\operatorname{Dom}(\tilde{L})\longrightarrow \operatorname{Ran}(L)$ introduced in the proof of Lemma~\ref{Lemma1} at the end of Section~\ref{sec2}. Since both, domain and range, are closed subspaces of co-dimension 1, it is enough to show that the inverse, $\tilde{S}$, lies in the required Schatten class. Now, $\tilde{S}$ is the compression to $\operatorname{Ran}(L)$, of a rank-one perturbation of the integral operator $S:\mathrm{L}^2(-1,1)\longrightarrow \mathrm{L}^2(-1,1)$,  
\[
     Sw(z)=\int_{0}^z H(x,z)w(z)\,\mathrm{d}z,
\]   
where $H(x,z)$ is the kernel from the expression \eqref{kernelH}. Therefore, it is enough to show that $S\in\mathcal{C}_{p}$ for $p>\frac23$.  We split the proof of this into four steps.

\underline{Step 1}. Note that
\begin{align*}
    \int_{-1}^1 H(x,z)w(z)\,\mathrm{d}z&=\int_{-1}^1 H(x,z)\left(\frac{\mathrm{d}}{\mathrm{d}z}\int_0^z w(t)\,\mathrm{d}t\right)\,\mathrm{d}z \\
    &= \left[H(x,z)\int_0^z w(t)\,\mathrm{d}t\right]_{z=-1}^{z=1}-\int_{-1}^1 \partial_z H(x,z)\int_0^z w(t)\,\mathrm{d}t\,\mathrm{d}z \\
    &=\int_{-1}^1 \tilde{H}(x,z) \left(\int_{0}^{z}|z|^{-\gamma}w(t)\,\mathrm{d}t\right) \,\mathrm{d}z,
\end{align*}
where $\gamma \in(0,\frac12)$ and
\[
    \tilde{H}(x,z)=\operatorname{sgn}(x)\frac{|z|^{\gamma} g^{\frac{1}{\varepsilon}}(z)}{g^{\frac{1}{\varepsilon}}(x)f(z)}\mathds{1}_{[0,|x|)}(\operatorname{sgn}(x)z).
\]
Therefore, $S=TV$ where $T$ is the integral operator associated with the kernel $\tilde{H}(x,z)$ and $V$ the integral operator associated with the kernel
\[
    v(z,t)=\operatorname{sgn}(z)|z|^{-\gamma}\mathds{1}_{[0,|z|)}(\operatorname{sgn}(z)t).
\]

\underline{Step 2}. We show that $T\in\mathcal{C}_2$. Indeed,
\[
     \int_{-1}^1 \int_{-1}^1|\tilde{H}(x,z)|^2\mathrm{d}z\mathrm{d}x=
     \int_{-1}^1\frac{1}{g^{\frac{2}{\varepsilon}}(x)}\left( \operatorname{sgn}(x)\int_{0}^x|z|^{2\gamma}\left|\frac{g^{\frac{2}{\varepsilon}}(z)}{f^2(z)}\right|\mathrm{d}z\right) \mathrm{d}x.
\]
The only singularity of the integrand occurs at the origin.
Now, since
\[
    |z|^{2\gamma}\left|\frac{g^{\frac{2}{\varepsilon}}(z)}{f^2(z)}\right|=O(|z|^{2\gamma+\frac{1}{\varepsilon}-2}),
\]
as $|z|\to 0$,
then
\[
   \int_{0}^x|z|^{2\gamma}\left|\frac{g^{\frac{2}{\varepsilon}}(z)}{f^2(z)}\right|\mathrm{d}z=O(|x|^{2\gamma+\frac{1}{\varepsilon}-1})
\]
as $|x|\to 0$. Then 
\[
   \int_{-1}^1 \int_{-1}^1|\tilde{H}(x,z)|^2\mathrm{d}z\mathrm{d}x=\int_{-1}^1
   \tilde{h}(x)\,\mathrm{d}x
\]
where the function $\tilde{h}(x)$ is continuous in $[-1,0)\cup(0,1]$ and $|\tilde{h}(x)|=O(|x|^{2\gamma-1})$ as $|x|\to 0$. Therefore, the double integral is finite, so $T\in\mathcal{C}_2$. 

\underline{Step 3}. We now show that $V\in\mathcal{C}_{\frac{1}{1-\gamma}}$. For this, see \cite[p.1117]{DS1963}, it is enough to show that the kernel $v(z,t)$ satisfies the H\"older condition,
\begin{equation} \label{Holder}
     \delta^{\gamma-\frac12}\left[\int_{-1}^1|v(z,t+\delta)-v(z,t)|^2 \,\mathrm{d}z \right]^{\frac12}\leq \Gamma
\end{equation}
for all $-1\leq t\leq 1$ and $\delta>0$ small enough. To show this, note that the square of the left hand side is bounded by
\[
    \int_0^1\frac{\big|v(|z|,|t|+\delta)-v(|z|,|t|)\big|^2}{\delta^{1-2\gamma}}\mathrm{d}z=\frac{1}{\delta^{1-2\gamma}}\int_{|t|}^{|t|+\delta}\frac{\mathrm{d}z}{|z|^{2\gamma}}=\frac{(|t|+\delta)^{1-2\gamma}-|t|^{1-2\gamma}}{(1-2\gamma)\delta^{1-2\gamma}}.
\]
But since the function $y\mapsto y^{1-2\gamma}$ is H\"older continuous of order $1-2\gamma$ for $y\in(0,1)$, indeed, there exists $\Gamma>0$ such that 
\eqref{Holder} holds true. Then, $V\in\mathcal{C}_{\frac{1}{1-\gamma}}$.

\underline{Step 4}. We conclude the proof of the lemma as follows. The previous two steps and the interpolation inequality of the Schatten classes \cite[Lemma~XI.9.9]{DS1963}, give
\[
      \|S\|_{\mathcal{C}_p}\leq 2^{\frac{1}{p}}\|T\|_{\mathcal{C}_2}\|V\|_{\mathcal{C}_{\frac{1}{1-\gamma}}}
\]
for $\frac{1}{p}=\frac{1}{2}+1-\gamma=\frac{3}{2}-\gamma$. But, since $\gamma$ here can be made arbitrarily close to $0$, we have that, indeed, $p>\frac23$ can be made arbitrarily close to $\frac{2}{3}$. 
\end{proof}

The Carleman-type inequality we apply next was recently implemented in \cite{AS}, in order to derive optimal asymptotic estimates for the resolvent norm of complex potential Schr\"odinger operators.    

\begin{proof}[Proof of Theorem~\ref{th:upperboundpseudospectrum}]
Let 
\[
    \eta=\frac12 \min \big\{|\lambda|:\lambda \in \operatorname{Spec}L\setminus \{0\}\big\}>0.
\]
Without loss of generality, we will prove that the inequality in the conclusion of the theorem,
\[
    \|(L-|E|\mathrm{e}^{i\alpha})^{-1}\|< c\exp\left[a|E|^{p}\right]
\]
holds true for all $|E|\geq 2\eta$. If it happens that $1<2\eta<\infty$, the same conclusion holds for $1\leq |E|\leq 2\eta$, by continuity and compactness.
If $\eta=\infty$ (which has not been ruled out in our results above), we can take $\eta=1$ in the next steps of this proof. Moreover, without loss of generality we will assume that the spectrum of $L$ is unbounded. Otherwise, the proof follows simpler arguments.

Our goal is to apply the next remarkable estimate from \cite[(1.2)]{Bandtlow2004}. If an operator $B\in \mathcal{C}_p$, then 
\begin{equation} \label{Carleman}
      \|(B-z)^{-1}\|\leq \frac{1}{\mathrm{dist}(z,\operatorname{Spec}B)}\exp\left[\frac{a_1}{\mathrm{dist}^p(z,\operatorname{Spec}B)}+d_1\right]
\end{equation}
for suitable constants $a_1>0$ and $d_1>0$, independent of $z$.
Here, we let $B=(L-i\eta)^{-1}\in\mathcal{C}_{p}$. By Theorem~\ref{lem:specgen}, we know that 
\[
     \operatorname{Spec}B=\left\{\frac{1}{\lambda-i\eta}:\lambda\in\operatorname{Spec}L\right\}\cup\{0\}
\]
is purely imaginary. 

First, let us show that there exist constants $c_3>c_2>0$, which can be chosen uniformly in $\alpha$ on compact sets, such that 
\begin{equation} \label{distancesto0}
    \frac{c_2}{|E|}\leq \mathrm{dist}\left(\frac{1}{E-i\eta},\operatorname{Spec} B\right)\leq \frac{c_3}{|E|}
\end{equation}
for $|E|\geq 2\eta$. Let $\tilde{E}=E-i\eta=|\tilde{E}|\mathrm{e}^{i\tilde{\alpha}}$. Then $\mathrm{e}^{i\tilde{\alpha}}\not\in i\mathbb{R}$,
\[
     \frac{|E|}{2}<|\tilde{E}|<\frac{3|E|}{2}
\]
and
\[
\operatorname{dist}\left(\frac{1}{E-i\eta},\operatorname{Spec}B\right)= \inf_{\lambda \in \operatorname{Spec}L}\left|\frac{1}{\tilde{E}}-\frac{1}{\lambda-i\eta}\right| = \frac{1}{|\tilde{E}|}\inf_{\lambda \in \operatorname{Spec}L}\left|1-\frac{|\tilde{E}|\mathrm{e}^{i\tilde{\alpha}}}{\lambda-i\eta}\right|.
\]
We estimate bounds for this infimum. On the one hand,
\[
\inf_{\lambda \in \operatorname{Spec}L}\left|1-\frac{|\tilde{E}|\mathrm{e}^{i\tilde{\alpha}}}{\lambda-i\eta}\right| \leq \liminf_{\substack{|\lambda|\to \infty \\\lambda \in \operatorname{Spec}L}} \left| 1-\frac{|\tilde{E}|\mathrm{e}^{i\tilde{\alpha}}}{\lambda-i\eta}  \right|= 1.
\]
Thus, the second inequality in \eqref{distancesto0} holds true for $c_3=2$.  On the other hand, 
since $\lambda \in \operatorname{Spec}L$ implies $\lambda \in i(-\infty,-2\eta]\cup i[2\eta,\infty)\cup \{ 0 \},$
\[
\inf_{\lambda \in \operatorname{Spec}L}\left|1-\frac{|\tilde{E}|\mathrm{e}^{i\tilde{\alpha}}}{\lambda-i\eta}\right|\geq \inf_{w\in (-\infty,-\eta]\cup[\eta,\infty) }\left|1-\frac{i|\tilde{E}|\mathrm{e}^{i\tilde{\alpha}}}{w}\right|\geq 
\mathrm{dist}(1,\{\mu ie^{i\tilde{\alpha}}: \mu\in\mathbb{R}\}).
\]
Since $i\mathrm{e}^{i\tilde{\alpha}}\not\in \mathbb{R}$, then the right hand side is positive, and there exists $c_2>0$ such that the first inequality in \eqref{distancesto0} is also valid. 

We complete the proof of the theorem as follows. Since, 
\[
    (L-E)^{-1}=B(I-(E-i\eta)B)^{-1}=\frac{1}{\tilde{E}}B\big((E-i\eta)^{-1}-B\big)^{-1},
\]
then 
\[
     \|(L-E)^{-1}\|\leq \frac{2}{|E|}\left\|\big(B-(E-i\eta)^{-1}\big)^{-1}\right\|
\]
for all $|E|\geq 2\eta$. Hence, substituting $z=\frac{1}{E-i\eta}$ and \eqref{distancesto0} into \eqref{Carleman}, yields
\[
    \|(L-E)^{-1}\|< \exp(a_2|E|^{p}+d_1)
\]
ensuring the validity of Theorem~\ref{th:upperboundpseudospectrum}. 
\end{proof}

\appendix

\section{The shape of the pseudo-modes as $|E|$ increases} \label{ApB}

\begin{figure}
 $|\lambda|=10$\includegraphics[width=.45\textwidth]{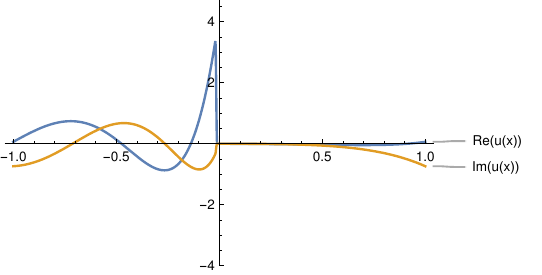} 
\includegraphics[width=.4\textwidth]{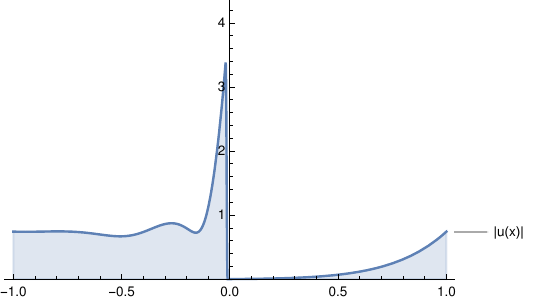}

$|\lambda|=25$ \includegraphics[width=.45\textwidth]{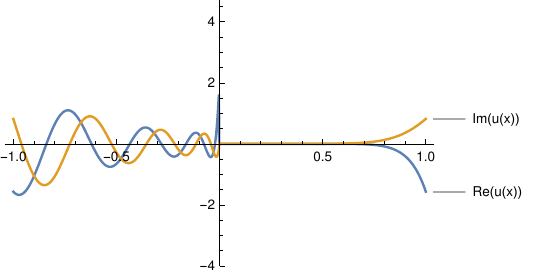} 
\includegraphics[width=.4\textwidth]{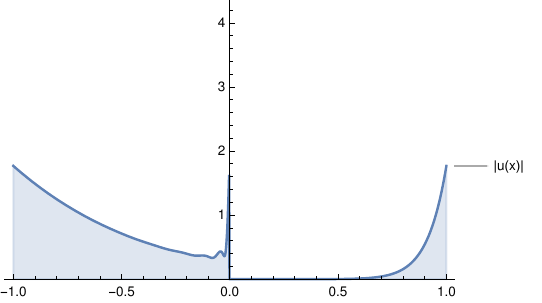}

$|\lambda|=50$ \includegraphics[width=.45\textwidth]{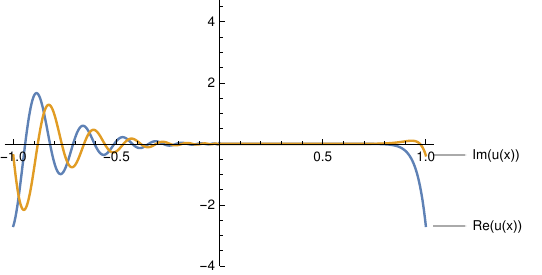} 
\includegraphics[width=.4\textwidth]{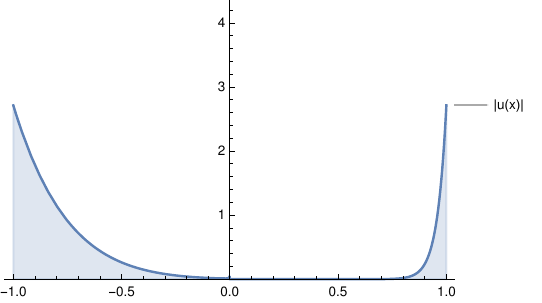}

$|\lambda|=75$ \includegraphics[width=.45\textwidth]{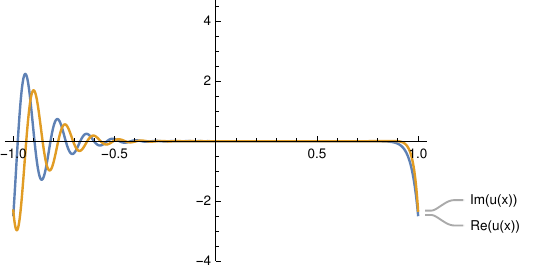}
\includegraphics[width=.4\textwidth]{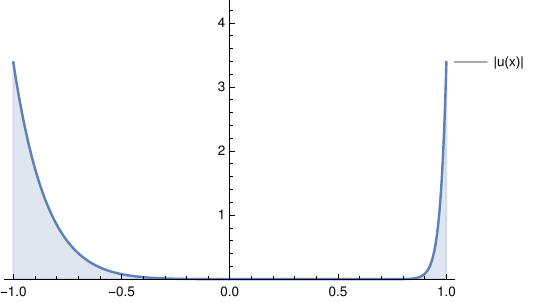}

$|\lambda|=100$ \includegraphics[width=.45\textwidth]{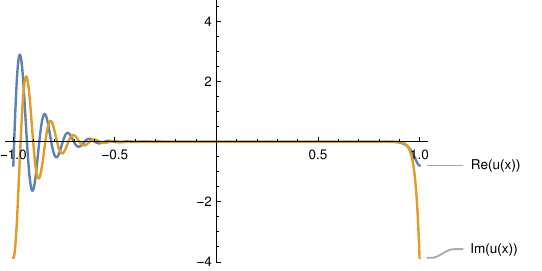} 
\includegraphics[width=.4\textwidth]{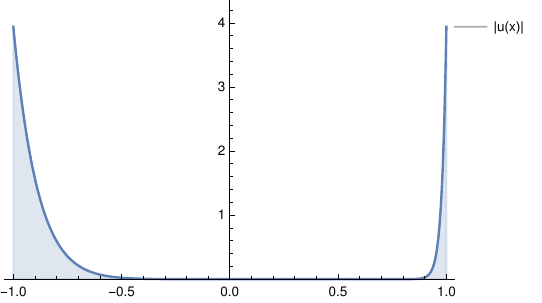}

\caption{Here $f(x)=\frac{x}{2}$ and $\lambda=|\lambda|e^{i\frac{9\pi}{16}}$. We show the pseudo-mode $\mathrm{u}_{\lambda}(x)$ from Proposition~\ref{claim:2} normalised to $\|\mathrm{u}_\lambda\|=1$ for $|\lambda|$ increasing.  }\label{fig3}
\end{figure}

\begin{figure}
 $\theta=\frac{\pi}{32}$
\includegraphics[width=.4\textwidth]{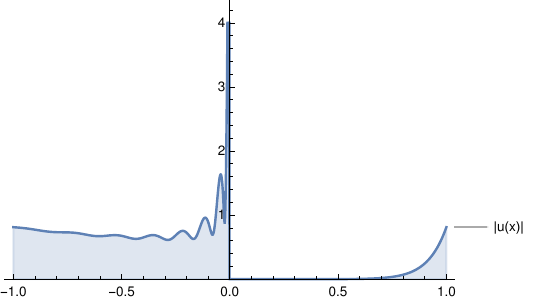} 
\includegraphics[width=.4\textwidth]{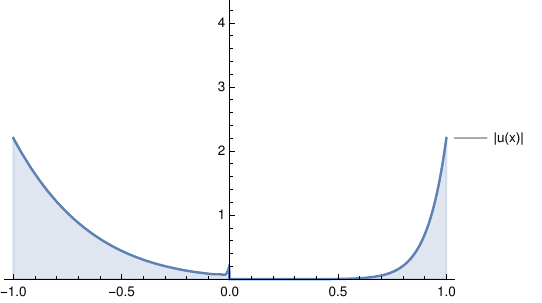} $\theta=\frac{3\pi}{32}$

 $\theta=\frac{\pi}{8}$\includegraphics[width=.4\textwidth]{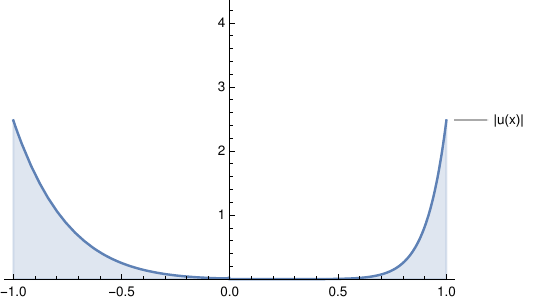} 
\includegraphics[width=.4\textwidth]{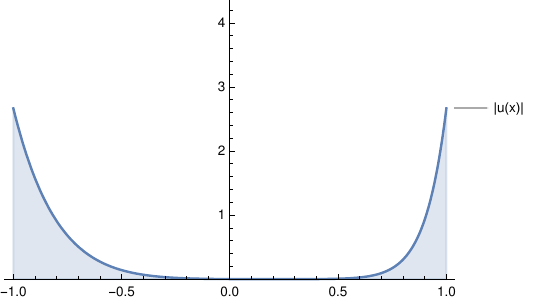} $\theta=\frac{5\pi}{32}$

 $\theta=\frac{3\pi}{16}$\includegraphics[width=.4\textwidth]{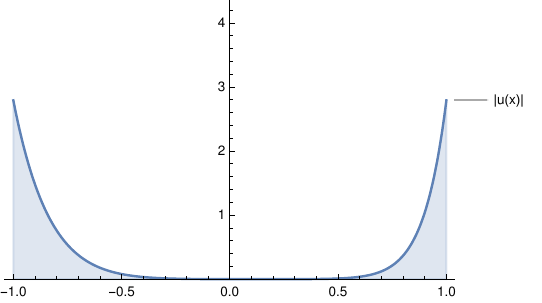} 
\includegraphics[width=.4\textwidth]{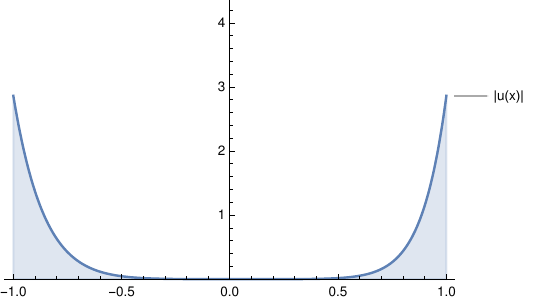} $\theta=\frac{7\pi}{32}$

\caption{Here $f(x)=\frac{x}{2}$ and $\lambda=25e^{i(\frac{\pi}{2}+\theta)}$ for $\theta$ increasing. We show the modulus of the pseudo-mode $|\mathrm{u}_{\lambda}(x)|$ from Proposition~\ref{claim:2} normalised to $\|\mathrm{u}_\lambda\|=1$. For $\theta=\frac{\pi}{16}$ see Figure~\ref{fig3}. }\label{fig4}
\end{figure}

The purpose of this appendix is to illustrate the evolution of the shape of the pseudo-modes constructed in this paper. We consider those in the proof of Proposition~\ref{claim:2}. Below we show two figures with graphs of $\mathrm{u}_{\lambda}(x)$ and $|\mathrm{u}_{\lambda}(x)|$, normalised by $\|\mathrm{u}_{\lambda}\|=1$, for different values of $\lambda$. We have produce these figures by plotting on a computer the exact formula \[\mathrm{u}_{\lambda}(x)=\frac{1}{\|\chi\Phi\|}\chi(x)\Phi(x)\]
where $\Phi(x)$ is the expression \eqref{linearPhi} and $\chi(x)$ is as in \eqref{eq:chidef}. 

In Step~2 of the proof of Proposition~\ref{claim:2}, we have used the crucial fact that the total mass of $\mathrm{u}_\lambda$ in $[-1,-\frac12]$ is exponentially large compared to the norm $\|(L-E)\mathrm{u}_{\lambda}\|$. See the proof of \eqref{lowerdenominator} and the bound \eqref{uppernumerator} as $|\lambda|$ increases. Figure~\ref{fig3} shows this phenomenon in action. As $|\lambda|$ increases from 10 to 100, the pseudomode has a transitional phase from being concentrated near the origin ($|\lambda|=10,\,25$), to accumulating most of the mass in $[-1,-\frac12]$  ($|\lambda|=50,\,75,\,100$). 

We also see in the same figure (left) that the quasi-mode developes an oscillatory behaviour of its real and imaginary parts. Remarkably (right) this oscillatory behaviour completely dissapears in the modulus $|\mathrm{u}_{\lambda}(x)|$. This phenomenon has been observed numerically in the semi-classical regime for related operators, c.f. \cite{Trefethen2005}.

In Figure~\ref{fig4} we fix $|\lambda|=25$ and change $\theta\in(0,\frac{\pi}{4})$ in the expression $\lambda=|\lambda|\mathrm{e}^{i(\frac{\pi}{2}+\theta)}$. When $\theta$ is close to $0$, we see that the mass of the pseudo-mode is mostly accumulated at the origin (this mass will migrate to $[-1,-\frac12]$ eventually as $|\lambda|$ increases). This corresponds to $E$ near the real axis. As $\theta$ increases, $E$ 
gets closer to the imaginary axis, where the spectrum lies, and we see that $|\mathrm{u}_{\lambda}(x)|$ now concentrates towards $\pm1$ with most of the mass in $[-1,-\frac12]\cup[\frac12,1]$. 

\section{Notation used in the paper}

{\small \begin{table}[h]
\begin{tabular}{|c|l|p{5.8cm}|}
\hline
Identities & Parameter  & Constraint or definition \\
\hline
& $x$ & $-1\leq x\leq 1$ \\   &
$\varepsilon$ & $0<\varepsilon<1$\\
$(f(x)\phi'(x)+\phi(x))'=E\phi(x)$ &$f(x)$ & Twice continuously differentiable, odd, analytic at $x=0$, $\sgn(f(x))=\sgn(x)$.\\
 \eqref{eq:1}, \eqref{master_eq_g}, \eqref{eq:bvp}, \eqref{eq:atg}
& & \quad $f(x)=\frac{2\varepsilon x}{1 + x r(x)} =2\varepsilon x+O(x^3)$ \\
&$r(x)$ & Odd, analytic at $x=0$. \\
&$E$ & $E\in\mathbb{C}$ \\
&$\phi(x)$ & Solution, no prescription of BC. \\
&$\Phi(x;E)$ & Scaled solution,  $\Phi(0;E)=1$. \\
\hline
 & $y$ & $y=g(x)$, $0\leq y \leq b_1$ \\ 
$(\varepsilon y F'(y))'+F'(y)=Eh(y)F(y)$ &$b_1$ & $b_1=g(1)$ \\
&$g(x)$ & $f(x)g'(x)=\varepsilon g(x)$, \qquad $g(x)\sim |x|^{\frac12}$. \\ 
\eqref{master_eq_g}, \eqref{superBessel}, \eqref{superBessel2} &$F(y)$ & $\phi(x)=F(g(x))$ \\
&$h(y)$ & $h(y)=\frac{f(g^{-1}(y))}{\varepsilon y}=2y+O(y^3)$ \\ \hline 
& $G(y)$ & $G(y)=y^{\beta}F(y)$ \\$G''(y)-\frac{m^2-\frac14}{y^2}G(y)=$ & $\beta$ & $\beta=\frac{2}{\varepsilon}+\frac12$ \\  \qquad \qquad \qquad$4mE (1+\rho(y))G(y)$ 
&$\lambda$, $\theta$ & $\lambda^2=-4mE$  \\
& &  $\lambda=|\lambda|\mathrm{e}^{i\left(\frac{\pi}{2}+\theta\right)}$ for $0<\theta <\frac{\pi}{4}$ \\
&$m$ & $m=\frac{1}{2\varepsilon}$  \\
\eqref{superBessel0}, \eqref{superBessel3}  &$\rho(y)$ & $\rho(y)=\frac{h(y)}{y}-2=O(y^2)$  \\
&$\rho_3(y)$ & $\rho_3(y)=y\rho(y)$  \\ \hline \eqref{eq:atgprime} & $\rho_4(y)$ & $\rho_4(y)=(1+\rho(y))^{-\frac14}y^{-(m+\frac12)}\sim y^{-(m+\frac12)}$ \\ \hline
& $t$ & $0<t<b_2$, $t=\int_0^y \sqrt{1+\rho(s)}\,\mathrm{d}s$  \\ $-Z''(t) + q(t) Z(t) + $ & $b_2$ & $b_2=\int_0^{b_1} \sqrt{1+\rho(s)}\,\mathrm{d}s$  \\
\qquad \qquad $\frac{\ell(\ell+1)}{t^2} Z(t) = \lambda^2 Z(t)$&$Z(t;E)$ & $Z(t;E)=(1+\rho(y))^{\frac14} G(y)$  \\
& $\ell$ & $\ell=\frac{1}{2\varepsilon}-\frac12=m-\frac12$  \\
\eqref{eq:zLNF3}, \eqref{eq:zLNF}, \eqref{eq:transK} &$q(t)$ & $q:[0,b_2]\longrightarrow \mathbb{R}$ is continuous and bounded. See Lemma~\ref{lemma:5}.\\
&$\tau(x)$ & $\tau(x)=\int_0^{g(x)} \sqrt{1+\rho(s)}\,\mathrm{d}s\sim|x|^{\frac12}$ \\
\hline
\end{tabular} 
\caption{ Relation between the notation and changes of variables used through the paper.}
\label{table1}
\end{table}} 


\section*{Acknowledgements}
We are sincerely grateful to Antonio Arnal, David Krej{\v c}i{\v r}{\'\i}k and Petr Siegl, for their insightful comments and attentive reading of an earlier version of this manuscript. We also thank our colleagues at the Czech Technical University in Prague for hosting many of the discussions that eventually lead to this paper.

\section*{Funding}
M. Marletta was partially funded by EPSRC grant EP/T000902/1. 


\bibliographystyle{ieee}

\end{document}